\newtheorem{thrm}{Theorem}[section]
\newtheorem{lem}[thrm]{Lemma}
\newtheorem{prop}[thrm]{Proposition}
\newtheorem{cor}[thrm]{Corollary}
\theoremstyle{definition}
\newtheorem{example}[thrm]{Example}
\newtheorem{remark}[thrm]{Remark}
\newcommand{\R}{\mathbb R} %real numbers
\newcommand{\N}{\mathbb N} %natural numbers
\newcommand{\abs}[1]{\left\lvert#1\right\rvert} %absolute value
\newcommand{\norm}[1]{\left\lVert#1\right\rVert} %norm
\newcommand{\iprod}[1]{\left\langle#1\right\rangle} %inner product
\DeclareMathOperator{\closure}{cl}
\numberwithin{equation}{section}
\author[A. W. Guti{é}rrez]{Armando W. Guti{é}rrez}
\address{
Department of Mathematics and Systems Analysis\\ 
Aalto University\\ 
Otakaari 1 Espoo, Finland}
\email{wladimir.gutierrez@aalto.fi}
\thanks{This work was supported by the Academy of Finland, Grant No. 288318.}
\keywords{metric compactification, horofunction compactification, metric functional, Banach spaces}
\subjclass[2010]{Primary 54D35, Secondary 53C23, 46B45}
\begin{document}

\title[metric functionals on infinite-dimensional $\ell_{p}$ spaces]
{On the metric compactification of infinite-dimensional $\ell_{p}$ spaces}

\begin{abstract}
	The notion of metric compactification
	was introduced by Gromov and later rediscovered
	by Rieffel; and has been mainly studied on proper
	geodesic metric spaces.  
	We present here a generalization of the
	metric compactification that can be
	applied to infinite-dimensional Banach spaces. 
	Thereafter we give a complete description of the
	metric compactification of infinite-dimensional 
	$\ell_{p}$ spaces for all $1\leq p < \infty$. 
	We also give a full characterization of the metric
	compactification of infinite-dimensional Hilbert spaces. 
\end{abstract}
\maketitle

\section{Introduction}
In consonance with topology and potential theory,
Gromov introduced \cite{Gromov1981,Ballmann_Gromov_Schroeder1985}
a method to attach an ideal boundary $X_{\infty}$
at infinity of a metric space $X$. 
The method consists in mapping $X$ into 
the set of real-valued continuous functions $C(X)$ 
equipped with the topology of uniform convergence on 
bounded subsets. If the metric space $X$ is proper and geodesic, then
Gromov's bordification $X\sqcup X_{\infty}$
becomes a compact topological space which contains $X$ 
as a dense open subset \cite{Ballman1995,Bridson_Haefliger1999}. 
Later, Rieffel \cite{Rieffel2002} obtained the compact space
$X\sqcup X_{\infty}$ as the maximal ideal space
of a unital commutative $C^{*}$-algebra, and termed it
the metric compactification. Although it
is perhaps more often known as the horofunction compactification.
For finite-dimensional Banach spaces, this compactification
has been studied in 
\cite{Friedland_Freitas2004, Karlsson_Metz_Noskov2006, Walsh2007, Ji_schilling2016, Gutierrez2017}.

Compactness is a fundamental tool in mathematics.
Infinite-dimensional Banach spaces
are not locally compact so Gromov's procedure
does not yield a compactification, but only a bordification.
However, there is an alternative method to
compactify an infinite-dimensional Banach space $X$.
By taking instead the topology of pointwise
convergence on $C(X)$ and following 
\cite{Gaubert_Vigeral2012, Maher_Tiozzo2014, Gouzel_Karlsson2015} 
we obtain a metric compactification of $X$ in a weak sense, 
see \Cref{concepts}.
While studying certain metric
geometries of infinite-dimensional convex cones,
Walsh presented in \cite{Walsh2016} a description of 
a certain subset of the metric compactification of
infinite-dimensional Banach spaces, namely the set of 
Busemann points. A Busemann point is an element
of the metric compactification obtained as a limit 
of some almost-geodesic net, a concept that was first introduced 
by Rieffel and then slightly modified by Walsh.

We emphasize that the techniques we use 
in this paper are significantly different from those used by Walsh. 
Moreover, we provide explicit formulas for 
all the elements of the metric compactification of some
classical infinite-dimensional Banach spaces.

Let $J$ be any nonempty index set. We denote by 
$x=(x(j))_{j\in J}$ any element of the real vector space 
$\R^{J}$. For every $1\leq p < \infty$ we consider the
$p$-norm $\norm{\cdot}_p$ on $\R^{J}$ defined by  
$\norm{x}_{p}:=\big( \sum_{j\in J}\abs{x(j)}^{p} \big)^{1/p}$,
where the sum is given by
\begin{equation}\label{sumdef}
	\sum_{j\in J}\abs{x(j)}^{p}=
		\sup \bigg\{ \sum_{j\in F}\abs{x(j)}^{p}\;\bigg\vert\; F \text{ a finite subset of } J \bigg\},
\end{equation} 
for all $x\in\R^{J}$.
We denote by $\ell_p(J)$ the space
of all $x=(x(j))_{j\in J}$ in $\R^{J}$ such that 
$\norm{x}_{p}$ is finite. That is, 
$\ell_{p}(J)=\big\{x\in\R^{J} \;\big\vert\; \norm{x}_{p}<\infty \big\}$.
The space $\ell_{\infty}(J)$ consists
of all $x=(x(j))_{j\in J}$ in $\R^{J}$ such that
$\sup_{j\in J}\abs{x(j)}$ is finite.

In \cite{Gutierrez2017} the author gives a complete
description of the metric compactification of
$\ell_p(\{1,...,N\})$ for all $1\leq p \leq \infty$ and 
$N\in\N$. 
In this paper we study the metric 
compactification of $\ell_p(J)$ for all $1\leq p < \infty$
and $J$ any countably infinite or uncountable index set.

The paper is organized as follows. In \Cref{concepts} we
introduce the notion of metric compactification 
of infinite-dimensional Banach spaces.
In \Cref{horocompl1} we give a complete
description of the metric compactification
of the infinite-dimensional Banach space $\ell_{1}(J)$. 
In \Cref{horoHilbert} we give a
complete description of the metric
compactification of infinite-dimensional
real Hilbert spaces. In \Cref{horolp} 
we give a complete description
of the metric compactification
of the infinite-dimensional Banach space
$\ell_{p}(J)$ for all $1<p<\infty$.

Recent works
have shown that the metric compactification 
provides a powerful modern tool 
for the study of deterministic and random 
dynamics of nonexpansive mappings
\cite{Karlsson_Ledrappier2011, Gaubert_Vigeral2012, Karlsson2014, Gouzel_Karlsson2015}. 
The purpose of this paper is to present explicit formulas 
for all the elements
of the metric compactification of infinite-dimensional 
$\ell_{p}$ spaces.

\section{Preliminaries}\label{concepts}
\subsection{The metric compactification}
Let $(X,d)$ be a metric space. Fix an arbitrary \emph{base point} 
$b$ in $X$. For each $y\in X$ we will consider the element
$h_{b,y}$ of $\R^{X}$ defined by
\begin{equation}\label{emb}
	h_{b,y}(x):=d(x,y)-d(b,y), \text{ for all } x\in X. 
\end{equation}

For every $y\in X$ the 
function $h_{b,y}$ is bounded from below by $-d(b,y)$. Moreover, 
$\{ h_{b,y} \mid y\in X\}$ is a family of $1$-Lipschitz
functions with respect to the metric $d$. Indeed, by the 
triangle inequality we have
\begin{equation*}
	\begin{aligned}
	\abs{h_{b,y}(x)-h_{b,y}(z)} &=\abs{d(x,y)-d(b,y)-d(z,y)+d(b,y)} \\
			&=\abs{d(x,y)-d(z,y)} \\
			&\leq d(x,z),
	\end{aligned}		
\end{equation*}
for all $x,z\in X$. Furthermore, by taking $z=b$ we obtain 
$\abs{h_{b,y}(x)}\leq d(x,b)$ for all $x\in X$. Hence
\begin{equation}\label{subsetcomp}
		\{ h_{b,y} \,\mid\, y\in X \} \subset \prod_{x\in X}[-d(x,b),d(x,b)].
\end{equation}

By Tychonoff's theorem the set on the right-hand side of 
(\ref{subsetcomp}) is compact in the product topology. 
Therefore the set $\{ h_{b,y} \,\vert\, y\in X \}$
has compact closure in this topology, which is equivalent 
to the topology of pointwise convergence. 
Moreover, for any two different base points $b,b'\in X$ the
equality
\begin{equation}\label{homeo}
	h_{b,y}(x)-h_{b,y}(b')=h_{b',y}(x)
\end{equation}
holds for all $x\in X$. Then (\ref{homeo}) 
induces a homeomorphim between the closures 
$\closure(\{ h_{b,y} \,\vert\, y\in X\})$
and $\closure(\{ h_{b',y} \,\vert\, y\in X)$. 
We will write $h_{y}$ instead of $h_{b,y}$, and so we will say that 
\begin{equation}\label{metriccompact}
	\overline{X}^{h}:=\closure(\{ h_{y} \,\vert\, y\in X \})
\end{equation} 
is the \emph{metric compactification} of $(X,d)$. 
As in \cite{Gouzel_Karlsson2015}, we will call all the 
elements of $\overline{X}^{h}$ \emph{metric functionals} 
on $X$. 
By following \cite{Maher_Tiozzo2014}, 
it will be convenient to consider the partition 
$\overline{X}^{h}=\overline{X}^{h,F}\sqcup\overline{X}^{h,\infty}$,
where
\begin{equation*}
\begin{aligned}
	\overline{X}^{h,F} &:=\big\{h\in \overline{X}^{h} \,\big\vert\, \inf_{x\in X} h(x) > - \infty \big\},\\
	\overline{X}^{h,\infty} &:=\big\{h\in \overline{X}^{h} \,\big\vert\, \inf_{x\in X} h(x) = - \infty \big\}.
\end{aligned}
\end{equation*}

The elements of $\overline{X}^{h,F}$ will be called \emph{finite metric functionals}
while the elements of $\overline{X}^{h,\infty}$ will be called \emph{metric functionals at infinity}.
It is clear that $\overline{X}^{h,F}$
contains the set $\{ h_{y} \,\vert\, y\in X \}$ of \emph{internal metric functionals}.

\begin{remark}
Recall that a metric space $(X,d)$ is geodesic if every
pair of points $x,y\in X$ can be joined by a path
$\gamma:[0,d(x,y)]\to X$ such that 
$\gamma(0)=x$, $\gamma(d(x,y))=y$ and
$d(\gamma(s),\gamma(t))=\abs{s-t}$
for all $s,t$. We say that $X$
is proper if every closed ball
of $X$ is compact. 
If $(X,d)$ is a proper geodesic metric space
then the set $\overline{X}^{h}$ in (\ref{metriccompact}) 
coincides with the usual Gromov's
metric compactification. Indeed, 
since $X$ is proper and $\{ h_{y}\,\vert\, y\in X \}$ is a family of
$1$-Lipschitz functions, it follows by the Arzel\`a-Ascoli theorem
that the topology of pointwise and the topology of uniform convergence
on bounded subsets will produce the same compact closure (\ref{metriccompact}).
Moreover, since $X$ is geodesic, the set of internal metric functionals 
$\{ h_{y}\,\vert\, y\in X \}$
can be identified with $X$ 
which becomes a dense open set in $\overline{X}^{h}$. The set
\begin{align*}
	\partial_{h} X:=\overline{X}^{h}\setminus X
\end{align*}
is called the \emph{metric boundary} (or \emph{the horofunction boundary})
of $X$.
For general metric spaces, however, 
although the space $\overline{X}^{h}$ is always compact
with respect to 
the pointwise topology, the injection $y\mapsto h_{y}$ 
may not be an embedding from $X$ to $\overline{X}^{h}$,
see \cite{Gaubert_Vigeral2012} for more details.
\end{remark}

\begin{remark}
If $X$ is a Banach space with metric induced by a 
norm $\norm{\cdot}$, we choose the base point 
$b=0\in X$ so (\ref{emb}) 
becomes 
\begin{equation*}
	h_{y}(x)=\norm{x-y}-\norm{y}.
\end{equation*} 
If $X$ is a finite-dimensional Banach space, 
then every metric functional 
$h\in\overline{X}^{h}$
can be written as $h=\lim_{n\to\infty} h_{y_n}$, for some 
sequence $\{y_n\}_{n\in\N}$ in $X$. 
For an infinite-dimensional Banach space $X$, 
the compact space $\overline{X}^{h}$ may not be metrizable.
However, for every metric functional $h\in\overline{X}^{h}$ 
there will always exist a net $\{y_\alpha\}_{\alpha}$ in $X$ such that 
$h_{y_\alpha}\underset{\alpha}\longrightarrow h$ pointwise on $X$.
For details about convergence of nets we refer 
to \cite{Kelley1975}. 
\end{remark}

The following is a characterization of the finite
metric functionals on Banach spaces.
\begin{lem}
If $\{y_{\alpha}\}_{\alpha}$ is a bounded net in a
Banach space $(X,\norm{\cdot})$ such that 
$h_{y_{\alpha}}\underset{\alpha}\longrightarrow h$
pointwise on $X$, then $h\in \overline{X}^{h,F}$.
\end{lem}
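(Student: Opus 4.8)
The plan is to exploit the uniform lower bound on the family $\{h_{y_{\alpha}}\}_{\alpha}$ that comes from the boundedness of the net, and then to push this bound through the pointwise limit.

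First I would set $M:=\sup_{\alpha}\norm{y_{\alpha}}$, which is finite by hypothesis. Recall from \eqref{emb}, with the base point $b=0$, that $h_{y_{\alpha}}(x)=\norm{x-y_{\alpha}}-\norm{y_{\alpha}}$, and recall the observation made right after \eqref{emb} that each $h_{y_{\alpha}}$ is bounded from below by $-\norm{y_{\alpha}}$. Consequently $h_{y_{\alpha}}(x)\geq -\norm{y_{\alpha}}\geq -M$ for every $x\in X$ and every index $\alpha$. The essential feature of this estimate is that the bound $-M$ depends on neither $x$ nor $\alpha$.

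Next I would fix an arbitrary $x\in X$. Since $h_{y_{\alpha}}(x)\underset{\alpha}\longrightarrow h(x)$ and $h_{y_{\alpha}}(x)\geq -M$ for all $\alpha$, passing to the limit yields $h(x)\geq -M$. As $x$ was arbitrary, we get $\inf_{x\in X}h(x)\geq -M>-\infty$, so $h\in\overline{X}^{h,F}$ by definition of the finite metric functionals.

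There is no serious obstacle here; the only point requiring a little care is that the lower bound $-M$ is uniform in $x$, so that it controls the infimum and not merely each value separately. It is also worth noting that the boundedness hypothesis on the net is genuinely used: without it the limit functional may well lie in $\overline{X}^{h,\infty}$.
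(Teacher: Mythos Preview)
Your proof is correct and follows essentially the same approach as the paper: both use the elementary lower bound $h_{y_{\alpha}}(x)\geq -\norm{y_{\alpha}}$ together with boundedness of the net to obtain a uniform lower bound on $h$. The only cosmetic difference is that the paper first passes to a subnet so that $\norm{y_{\alpha}}\to c$ and then bounds $h$ from below by $-c$, whereas you bound directly by $-M=-\sup_{\alpha}\norm{y_{\alpha}}$; your version is marginally cleaner since it avoids the subnet step.
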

\begin{proof}
Indeed, by passing to a subnet
we may assume that $\norm{y_{\alpha}}\underset{\alpha}\longrightarrow c$.
Also, for every $x\in X$ and for every $\alpha$ we have
\begin{equation*}
	h_{y_{\alpha}}(x)=\norm{x-y_{\alpha}}-\norm{y_{\alpha}} \geq -\norm{y_{\alpha}}. 
\end{equation*}
Then $h(x)\geq -c$ for all $x\in X$, so the claim follows. 
\end{proof}

It is important to notice that if $X$ is a finite-dimensional
Banach space then $\overline{X}^{h,F}=\{h_{y} \,\vert\, y\in X\}$. 
For infinite-dimensional Banach spaces, 
the set $\overline{X}^{h,F}$ of finite metric functionals
may be larger than the set $\{h_{y} \,\vert\, y\in X\}$ 
of internal metric functionals. 

\subsection{Some facts in Banach space theory}
For convenience of the reader we establish some
notations and recall some important facts in
Banach space theory that will be useful in the following sections.
Let $(X,\norm{\cdot})$ be a Banach space with dual space 
$(X^{*},\norm{\cdot}_{*})$. Let $\{x_{\alpha}\}_{\alpha}$ be
any net in $X$ and let $x$ be any vector in $X$. We say
that $x_{\alpha}$ converges strongly to $x$,
and denote by $x_{\alpha} \underset{\alpha}\longrightarrow x$, 
if $\lim_{\alpha}\norm{x_{\alpha}-x} = 0$.
We say that $x_{\alpha}$ converges weakly to $x$, and 
denote by $x_{\alpha} \overset{w}{\underset{\alpha}\longrightarrow} x$, 
if $\lim_{\alpha}\nu(x_{\alpha}) = \nu(x)$ for
all $\nu\in X^{*}$. Let $\{\mu_{\alpha}\}_{\alpha}$ be
any net in $X^{*}$ and let $\mu$ be any vector in $X^{*}$. We say
that $\mu_{\alpha}$ converges to $\mu$ in the weak-star topology, 
and denote by $\mu_{\alpha} \overset{w^{*}}{\underset{\alpha}\longrightarrow} \mu$,
if $\lim_{\alpha}\mu_{\alpha}(x) = \mu(x)$
for all $x\in X$. 

Throughout we will make use of Alaoglu's theorem 
\cite[Prop.~6.13]{Khamsi_Kirk2001}, which
states that the closed unit ball of the
dual space $X^{*}$ is compact in the weak-star topology.
Likewise, it is well known that a
Banach space $X$ is reflexive if and only if
its closed unit ball is compact in the weak topology, see 
\cite[Prop.~6.14]{Khamsi_Kirk2001}. 
In particular, every bounded net in a reflexive
Banach space has a weakly convergent subnet. 

In the following sections we will denote by $c_{0}(J)$ the space
of all $x\in\ell_{\infty}(J)$ such that the set
$\big\{j\in J \,\big\vert\, \abs{x(j)}\geq\epsilon \big\}$ is finite
for all $\epsilon >0$. The space $c_{0}(J)$ is a Banach space
with respect to the norm inherited from $\ell_{\infty}(J)$.
We denote by $c_{00}(J)$ the space
of all $x\in\ell_{\infty}(J)$ such that the set
$\big\{j\in J \,\big\vert\, x(j)\neq 0 \big\}$ is finite. 
For every $x\in\ell_{p}(J)$ the sum in (\ref{sumdef}) is finite 
so by considering the set inclusion $\subseteq$ as a partial order on
the set of all finite subsets $F$ of $J$, 
the set $\big\{\sum_{j\in F}\abs{x(j)}^{p} \big\}_{F}$ is a net in $\R$
that converges to $\sum_{j\in J}\abs{x(j)}^{p}=\norm{x}_{p}^{p}$
for all $1\leq p<\infty$. Therefore, it follows that 
$c_{00}(J)$ is dense 
in $\ell_{p}(J)$ for all $1\leq p<\infty$.

\section{The metric compactification of $\ell_{1}$}\label{horocompl1}
Throughout this section for each $y\in\ell_{1}(J)$ we denote by 
$h_{y}$ the function defined on $\ell_{1}(J)$ by
\begin{equation*}
	h_{y}(x):=\norm{x-y}_{1}-\norm{y}_{1}, \text{ for all } x\in\ell_{1}(J). 
\end{equation*}

In order to find possible metric functionals
on $\ell_{1}(J)$ we will need the following argument,
which was also used in \cite[Lemma~3.1]{Gutierrez2017} 
to describe the metric compactification 
of the finite-dimensional space $\ell_{1}(\{1,...,N\})$. 
\begin{prop}\label{1-dimhoro}
Let $\{a_{\beta}\}_{\beta}$ be a net of real numbers. Then for every
$r\in\R$ 
\begin{align*}
	\abs{r-a_{\beta}}-\abs{a_{\beta}}\underset{\beta}\longrightarrow
	\begin{cases}
		-r, &\text{ if } a_{\beta}\underset{\beta}\longrightarrow +\infty,\\
		r,  &\text{ if } a_{\beta}\underset{\beta}\longrightarrow -\infty.
	\end{cases}
\end{align*}
Moreover, since $\{\abs{\cdot-a_\beta}-\abs{a_{\beta}}\}_{\beta}$
is a family of $1$-Lipschitz functions on $\R$, the convergence above 
is uniform on compact subsets of $\R$. 
\end{prop}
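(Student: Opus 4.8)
The plan is to establish the two limits first by a direct argument and then to deduce the uniform statement from the equi-Lipschitz property. For the pointwise part, fix $r\in\R$ and observe that the difference $\abs{r-a_{\beta}}-\abs{a_{\beta}}$ is in fact \emph{eventually constant}, not merely convergent. Suppose $a_{\beta}\underset{\beta}\longrightarrow+\infty$. Then there is an index beyond which $a_{\beta}>r$, and for all such $\beta$ we have $\abs{r-a_{\beta}}=a_{\beta}-r$ and $\abs{a_{\beta}}=a_{\beta}$, so $\abs{r-a_{\beta}}-\abs{a_{\beta}}=-r$. Symmetrically, if $a_{\beta}\underset{\beta}\longrightarrow-\infty$ then eventually $a_{\beta}<r$, whence $\abs{r-a_{\beta}}=r-a_{\beta}$ and $\abs{a_{\beta}}=-a_{\beta}$, giving $\abs{r-a_{\beta}}-\abs{a_{\beta}}=r$. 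This settles both displayed limits.

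Next I would upgrade this to uniform convergence on compact subsets of $\R$ by the standard Arzel\`a--Ascoli argument, which is available here precisely because the family is equi-Lipschitz. Write $f_{\beta}:=\abs{\cdot-a_{\beta}}-\abs{a_{\beta}}$ and let $f$ denote the limit function, that is $f(r)=-r$ in the first case and $f(r)=r$ in the second; note that $f$ is itself $1$-Lipschitz. Fix a compact set $K\subset\R$ and $\epsilon>0$. Cover $K$ by finitely many intervals of radius $\epsilon/3$ centred at points $r_{1},\dots,r_{m}\in K$. By the pointwise convergence already proved, for each $i$ there is an index $\beta_{i}$ such that $\abs{f_{\beta}(r_{i})-f(r_{i})}<\epsilon/3$ for all $\beta$ beyond $\beta_{i}$; since the index set is directed, there is a single index $\beta^{*}$ dominating all of $\beta_{1},\dots,\beta_{m}$. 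Then for every $\beta$ beyond $\beta^{*}$ and every $r\in K$, picking $r_{i}$ with $\abs{r-r_{i}}<\epsilon/3$ and combining the $1$-Lipschitz bounds on $f_{\beta}$ and on $f$ with the triangle inequality yields
\[
	\abs{f_{\beta}(r)-f(r)}\leq\abs{f_{\beta}(r)-f_{\beta}(r_{i})}+\abs{f_{\beta}(r_{i})-f(r_{i})}+\abs{f(r_{i})-f(r)}<\epsilon,
\]
which is the required uniform estimate on $K$.

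I do not expect any genuine obstacle; the computation is elementary, and the only point that deserves a moment's care is the net formulation of the Arzel\`a--Ascoli step, where one must invoke the directedness of the index set in order to merge the finitely many tail conditions coming from the points $r_{1},\dots,r_{m}$ into a single one. Everything else is immediate from the triangle inequality and from the observation, made above, that the convergence in the pointwise part is in fact eventual equality.
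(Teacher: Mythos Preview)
Your argument is correct. The paper actually gives no proof of this proposition at all; it is stated and immediately used, being treated as elementary. Your write-up therefore supplies details the paper omits, and the approach---eventual equality for the pointwise limit, followed by the standard $\epsilon/3$ equi-Lipschitz upgrade to uniform convergence on compacta---is exactly what one would expect.

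One cosmetic point: in the case $a_{\beta}\to+\infty$ you write ``there is an index beyond which $a_{\beta}>r$'' and then assert $\abs{a_{\beta}}=a_{\beta}$. The latter needs $a_{\beta}\geq 0$, which does not follow from $a_{\beta}>r$ when $r<0$. Of course this is harmless, since $a_{\beta}\to+\infty$ lets you take $a_{\beta}>\abs{r}$ instead; just tighten that one phrase and the proof is clean.
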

\begin{remark}\label{sumC00}
For every $x\in c_{00}(J)$ there exists a finite
subset $F$ of $J$ such that $x(j)\neq 0$ for all
$j\in F$, and $x(j)=0$ for all $j\in J\setminus F$.
Therefore,
\begin{align*}
	h_{y}(x) &=\sum_{j\in J}\abs{x(j)-y(j)}-\sum_{j\in J}\abs{y(j)}\\
			&=\sum_{j\in F}\big( \abs{x(j)-y(j)}-\abs{y(j)} \big).
\end{align*}
\end{remark}
%*********************************************************************************
\begin{lem}\label{lemmaAl1}
Let $\{y_{\alpha}\}_{\alpha}$ be any net in 
$\ell_{1}(J)$. Then there exists a subnet $\{y_{\beta}\}_{\beta}$, 
a subset $I$ of $J$, a vector of signs $\varepsilon\in\{-1,+1\}^{I}$, 
and a vector $z\in\R^{J\setminus I}$ 
such that the net $\{h_{y_{\beta}}\}_{\beta}$ converges 
pointwise on $\ell_{1}(J)$ to the function
\begin{align*}
	x\mapsto h^{\{I,\varepsilon,z\}}(x):=\sum_{j\in I}\varepsilon(j)x(j) 
						+ \sum_{j\in J\setminus I}\left( \abs{x(j)-z(j)}-\abs{z(j)} \right).
\end{align*}
\end{lem}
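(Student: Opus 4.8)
The plan is to extract the subnet coordinate-by-coordinate and then glue the limits together, using \Cref{1-dimhoro} to control the scalar behavior on each coordinate. First I would fix the net $\{y_\alpha\}_\alpha$ in $\ell_1(J)$ and, for each $j\in J$, look at the net of real numbers $\{y_\alpha(j)\}_\alpha$. Each such net lies in $\R$, which sits inside its two-point compactification $[-\infty,+\infty]$; since that space is compact, one can pass to a subnet along which $y_\alpha(j)$ converges in $[-\infty,+\infty]$. The subtlety is that $J$ may be uncountable, so one cannot simply iterate this countably many times. Instead I would invoke a standard diagonal-type argument for nets: the product $\prod_{j\in J}[-\infty,+\infty]$ is compact by Tychonoff, so the net $\{(y_\alpha(j))_{j\in J}\}_\alpha$ has a subnet $\{y_\beta\}_\beta$ converging in this product, i.e.\ $y_\beta(j)\to \ell_j\in[-\infty,+\infty]$ for every $j\in J$ simultaneously.

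Having fixed this subnet, I would define $I:=\{j\in J \mid \ell_j\in\{-\infty,+\infty\}\}$, set $\varepsilon(j):=+1$ if $\ell_j=+\infty$ and $\varepsilon(j):=-1$ if $\ell_j=-\infty$, and set $z(j):=\ell_j\in\R$ for $j\in J\setminus I$. To identify the pointwise limit of $\{h_{y_\beta}\}_\beta$ it suffices, by \Cref{sumC00} and the density of $c_{00}(J)$ in $\ell_1(J)$ together with the fact that $\{h_{y_\beta}\}_\beta$ is a net of $1$-Lipschitz functions (so pointwise convergence on a dense set forces pointwise convergence everywhere, the limit being $1$-Lipschitz hence continuous), to compute the limit on $x\in c_{00}(J)$. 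For such $x$, pick a finite $F\subseteq J$ with $x$ supported on $F$; then by \Cref{sumC00}, $h_{y_\beta}(x)=\sum_{j\in F}\bigl(\abs{x(j)-y_\beta(j)}-\abs{y_\beta(j)}\bigr)$, a finite sum. Applying \Cref{1-dimhoro} termwise: for $j\in F\cap I$ the $j$-th summand converges to $-\varepsilon(j)$-times... more precisely to $\varepsilon(j)x(j)$ (the limit is $-x(j)$ when $\ell_j=+\infty$, i.e.\ $\varepsilon(j)=+1$ gives $-x(j)$—so I must be careful with the sign convention and match it to the stated formula, reading off that $\abs{x(j)-a}-\abs{a}\to -x(j)$ as $a\to+\infty$ means the contribution is $\varepsilon(j)x(j)$ only after the sign of $\varepsilon$ in the statement is interpreted correctly; I would double-check this against \Cref{1-dimhoro} and adjust the definition of $\varepsilon$ accordingly); and for $j\in F\setminus I$ the $j$-th summand converges to $\abs{x(j)-z(j)}-\abs{z(j)}$ since $y_\beta(j)\to z(j)$ in $\R$ and $\abs{\cdot-y_\beta(j)}-\abs{y_\beta(j)}$ converges uniformly on compacts. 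Summing the finitely many terms gives exactly $h^{\{I,\varepsilon,z\}}(x)$ for $x\in c_{00}(J)$.

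Finally I would argue that $h^{\{I,\varepsilon,z\}}$, initially defined on $c_{00}(J)$, extends continuously (indeed $1$-Lipschitz) to all of $\ell_1(J)$ and that this extension is the pointwise limit on the whole space: given any $x\in\ell_1(J)$ and $\delta>0$, choose $x'\in c_{00}(J)$ with $\norm{x-x'}_1<\delta$; then $\abs{h_{y_\beta}(x)-h_{y_\beta}(x')}\le\delta$ for all $\beta$ by $1$-Lipschitzness, and similarly for the limit function, so $h_{y_\beta}(x)\to h^{\{I,\varepsilon,z\}}(x)$ follows from convergence at $x'$. The formula in the statement visibly defines this extension (the first sum $\sum_{j\in I}\varepsilon(j)x(j)$ converges absolutely since $x\in\ell_1(J)$, and the second sum is finite because $x$, hence $x-z$ restricted off $I$, lies in $\ell_1$—here one should note $\sum_{j\in J\setminus I}\bigl(\abs{x(j)-z(j)}-\abs{z(j)}\bigr)$ need not converge absolutely but does converge as written, which again follows from the net-of-finite-sums structure). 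The main obstacle is the passage to a single subnet working for all (possibly uncountably many) coordinates at once; this is handled cleanly by Tychonoff on $\prod_{j\in J}[-\infty,+\infty]$ rather than by a sequential diagonal argument. A secondary point requiring care is verifying that the limit function is genuinely well-defined on all of $\ell_1(J)$ and that convergence transfers from the dense subspace $c_{00}(J)$, which the uniform $1$-Lipschitz bound makes routine.
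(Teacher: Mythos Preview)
Your proposal is correct and follows essentially the same route as the paper's proof: apply Tychonoff to $[-\infty,+\infty]^{J}$ to extract a coordinatewise-convergent subnet, split $J$ into the indices with infinite versus finite limits, invoke \Cref{1-dimhoro} termwise on the finite sums arising from $x\in c_{00}(J)$ via \Cref{sumC00}, and then pass to all of $\ell_{1}(J)$ by density and the uniform $1$-Lipschitz bound. The only slip is your initial sign convention for $\varepsilon$, which you already flag; the paper takes $\varepsilon(j)=-1$ when $y_{\beta}(j)\to+\infty$ and $\varepsilon(j)=+1$ when $y_{\beta}(j)\to-\infty$, exactly so that \Cref{1-dimhoro} yields $\varepsilon(j)x(j)$.
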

\begin{proof}
Since $\ell_{1}(J)\subset \R^{J}\subset [-\infty,+\infty]^{J}$,
we may think of $\{y_{\alpha}\}_{\alpha}$ as
a net in the compact topological space 
$[-\infty,+\infty]^{J}$ with respect to the 
product topology. 
Therefore, there exists a subnet $\{y_{\beta}\}_{\beta}$
and a vector $\tilde{z}\in[-\infty,+\infty]^{J}$ such that
\begin{equation*}
	y_{\beta}(j)\underset{\beta}\longrightarrow \tilde{z}(j), \text{ for all } j\in J. 
\end{equation*}
Hence, there exists a subset $I$ 
(possibly empty) of $J$ such that $\tilde{z}(j)\in\{-\infty,+\infty\}$ 
for all $j\in I$, and $\tilde{z}(j)\in\R$ 
for all $j\in J\setminus I$. 
Put $z(j)=\tilde{z}(j)$ for all $j\in J\setminus I$, and 
for every $j\in I$
\begin{align*}
	\varepsilon(j)=\begin{cases}
						-1, &\text{ if } \tilde{z}(j)=+\infty,\\
						+1, &\text{ if } \tilde{z}(j)=-\infty.						
					\end{cases}
\end{align*}
Now, let $x$ be any element of $c_{00}(J)$. 
It follows from \Cref{sumC00} that there exists a finite
subset $F$ of $J$ such that $x(j)\neq 0$ for all
$j\in F$, and $x(j)=0$ for all $j\in J\setminus F$ for which
\begin{align*}
	h_{y_{\beta}}(x)  &= \sum_{j\in F\cap I}\left( \abs{x(j)-y_{\beta}(j)}-\abs{y_{\beta}(j)} \right) \\
					  & \qquad\qquad + \sum_{j\in F\cap (J\setminus I)}\left( \abs{x(j)-y_{\beta}(j)}-\abs{y_{\beta}(j)} \right).	
\end{align*}
By \Cref{1-dimhoro} we obtain
\begin{align*}
	\lim_{\beta}h_{y_{\beta}}(x) &= \sum_{j\in F\cap I}\varepsilon(j)x(j)
					 			 + \sum_{j\in F\cap (J\setminus I)}\left( \abs{x(j)-z(j)}-\abs{z(j)} \right)\\
								&= \sum_{j\in I}\varepsilon(j)x(j)
					  				+ \sum_{j\in J\setminus I}\left( \abs{x(j)-z(j)}-\abs{z(j)} \right).
\end{align*} 					  
Therefore, the claim of the Lemma follows readily from
the fact that $c_{00}(J)$ is dense in $\ell_{1}(J)$ and
$\{h_{y_{\beta}}\}_{\beta}$ is a family of $1$-Lipschitz
functions on $\ell_{1}(J)$. 
\end{proof}
\begin{example}
For simplicity let us assume that $J=\N$. Consider the 
following sequences in $\ell_{1}(\N)$:
\begin{align*}
	y_{n} &=(1,0,...,0,\underset{n^{\text{th}}}{n},0,0,...),\\
	\tilde{y}_{n} &=(n,1,1,...,\underset{n^{\text{th}}}{1},0,0,...).
\end{align*}
Then for every $x\in\ell_{1}(\N)$ we have
\begin{align}
	\lim_{n\to\infty}h_{y_{n}}(x) 
	&= \big( \abs{x(1)-1}-1\big) +\sum_{j=2}^{\infty}\abs{x(j)}=h_{z}(x),\label{ex1}\\
	\lim_{n\to\infty}h_{\tilde{y}_{n}}(x) 
	&= -x(1)+\sum_{j=2}^{\infty}\left( \abs{x(j)-1}-1 \right)=h^{\{I,\varepsilon,z\}}(x),\label{ex2}
\end{align}
where $z=(1,0,0,...)\in\ell_{1}(\N)$ in (\ref{ex1}) 
whereas $I=\{1\}\subset\N$, $\varepsilon(1)=-1$, $z(j)=1$ 
for all $j\geq 2$ in (\ref{ex2}).
Notice that in both cases we have
$\norm{y_{n}}_{1}\to\infty$ and $\norm{\tilde{y}_{n}}_{1}\to\infty$, 
as $n\to\infty$. However, $h^{\{I,\varepsilon,z\}}$ 
in (\ref{ex2}) is a metric functional at infinity
while $h_{z}$ in (\ref{ex1}) is an internal metric functional.
\end{example}
The following result shows that bounded
nets in $\ell_{1}(J)$ produce only internal
metric functionals. 
%********************************************************************************
\begin{lem}\label{lemmaBl1}
Let $\{y_{\alpha}\}_{\alpha}$ be a bounded net in 
$\ell_{1}(J)$. Then there exists a subnet 
$\{y_{\beta}\}_{\beta}$ and a vector $z\in\ell_{1}(J)$ 
such that the net $\{h_{y_{\beta}}\}_{\beta}$ 
converges pointwise to the function $h_{z}$.
\end{lem}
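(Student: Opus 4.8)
The plan is to feed the bounded net into \Cref{lemmaAl1} and then use boundedness to force the ``infinite part'' $I$ to be empty and the limit vector $z$ to lie in $\ell_{1}(J)$. First I would set $M:=\sup_{\alpha}\norm{y_{\alpha}}_{1}$, which is finite by hypothesis, and apply \Cref{lemmaAl1} to obtain a subnet $\{y_{\beta}\}_{\beta}$, a subset $I\subseteq J$, a sign vector $\varepsilon\in\{-1,+1\}^{I}$ and a vector $z\in\R^{J\setminus I}$ with $h_{y_{\beta}}\underset{\beta}\longrightarrow h^{\{I,\varepsilon,z\}}$ pointwise on $\ell_{1}(J)$. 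Recall from the proof of \Cref{lemmaAl1} that a coordinate $j$ belongs to $I$ precisely when the chosen subnet satisfies $y_{\beta}(j)\to\pm\infty$. But $\abs{y_{\beta}(j)}\leq\norm{y_{\beta}}_{1}\leq M$ for every $\beta$ and every $j\in J$, so no coordinate of the subnet can diverge; hence $I=\emptyset$, and the limit function reduces to
\begin{equation*}
	h^{\{\emptyset,\varepsilon,z\}}(x)=\sum_{j\in J}\big(\abs{x(j)-z(j)}-\abs{z(j)}\big),\qquad z\in\R^{J}.
\end{equation*}

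Next I would upgrade the merely coordinatewise limit to $z\in\ell_{1}(J)$. For any finite subset $F$ of $J$ the sum $\sum_{j\in F}\abs{y_{\beta}(j)}$ is a finite sum of convergent nets, so $\sum_{j\in F}\abs{z(j)}=\lim_{\beta}\sum_{j\in F}\abs{y_{\beta}(j)}\leq M$. Taking the supremum over all finite $F$ and using (\ref{sumdef}) gives $\norm{z}_{1}\leq M<\infty$, i.e.\ $z\in\ell_{1}(J)$. Consequently, for every $x\in\ell_{1}(J)$ both $x-z$ and $z$ lie in $\ell_{1}(J)$, the two series $\sum_{j}\abs{x(j)-z(j)}=\norm{x-z}_{1}$ and $\sum_{j}\abs{z(j)}=\norm{z}_{1}$ are finite, and therefore
\begin{equation*}
	h^{\{\emptyset,\varepsilon,z\}}(x)=\norm{x-z}_{1}-\norm{z}_{1}=h_{z}(x).
\end{equation*}
Thus $h_{y_{\beta}}\underset{\beta}\longrightarrow h_{z}$ pointwise on $\ell_{1}(J)$, which is the assertion.

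An equivalent route, which I would at least mention, is to invoke Alaoglu's theorem directly: since $\ell_{1}(J)$ is the dual space of $c_{0}(J)$, the closed ball of radius $M$ is compact in the weak-star topology, so the bounded net has a subnet converging in the weak-star topology to some $z\in\ell_{1}(J)$ with $\norm{z}_{1}\leq M$; this convergence in particular gives $y_{\beta}(j)\to z(j)$ for each $j\in J$, after which \Cref{1-dimhoro} applied coordinatewise, together with the density of $c_{00}(J)$ in $\ell_{1}(J)$ and the uniform $1$-Lipschitz property of the $h_{y}$'s, again yields $h_{y_{\beta}}\to h_{z}$.

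I do not expect a serious obstacle here: the only points requiring care are the passage from the coordinatewise limit $z\in\R^{J}$ furnished by \Cref{lemmaAl1} to genuine membership $z\in\ell_{1}(J)$, and the verification that the formal infinite sum defining $h^{\{\emptyset,\varepsilon,z\}}$ actually collapses to $h_{z}$ — both handled by the uniform bound $M$ together with absolute summability in $\ell_{1}(J)$.
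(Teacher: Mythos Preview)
Your argument is correct. Your primary route differs from the paper's: the paper invokes Alaoglu's theorem directly, using the identification $\ell_{1}(J)\cong c_{0}(J)^{*}$ to extract a weak-star convergent subnet with limit $z\in\ell_{1}(J)$, then verifies $h_{y_{\beta}}\to h_{z}$ on $c_{00}(J)$ and extends by density. You instead recycle \Cref{lemmaAl1} (whose compactness input is $[-\infty,+\infty]^{J}$, not Alaoglu) and then use the uniform bound $M$ twice---first to kill $I$, then to place $z$ in $\ell_{1}(J)$. Your approach is more self-contained within the section and makes transparent that boundedness is exactly what collapses the general functional $h^{\{I,\varepsilon,z\}}$ to an internal one; the paper's approach is shorter and delivers $z\in\ell_{1}(J)$ for free. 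The alternative you sketch at the end is precisely the paper's proof.
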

\begin{proof}
Recall that $\ell_{1}(J)$ and $c_{0}(J)^{*}$ 
are isometrically isomorphic under the surjective isometry 
$L: \ell_{1}(J) \to c_{0}(J)^{*}$, $y\mapsto L_{y}$ defined
by $L_{y}(x)=\sum_{j\in J} x(j)y(j)$, for all $x\in c_{0}(J)$. 
Therefore, we may consider 
the bounded net $\{y_{\alpha}\}_{\alpha}$ in $\ell_{1}(J)$ 
as a bounded net of continuous linear functionals on $c_{0}(J)$.
Hence, by Alaoglu's theorem there exists a subnet $\{y_{\beta}\}_{\beta}$ 
and a vector $z\in\ell_{1}(J)$ such that 
$y_{\beta}\overset{w^{*}}{\underset{\beta}\longrightarrow} z$.
In particular, we have $y_{\beta}(j)\underset{\beta}\longrightarrow z(j)$ 
for all $j\in J$. Now, let $x$ be any vector in $c_{00}(J)$. By
\Cref{sumC00}, there exists a finite
subset $F$ of $J$ such that $x(j)\neq 0$ for all
$j\in F$, and $x(j)=0$ for all $j\in J\setminus F$. Hence, 
\begin{align*}
	h_{y_{\beta}}(x) &= \sum_{j\in F}\abs{x(j)-y_{\beta}(j)}-\sum_{j\in F}\abs{y_{\beta}(j)}\\
					 & \underset{\beta}\longrightarrow \sum_{j\in F} \abs{x(j)-z(j)} - \sum_{j\in F}\abs{z(j)}.
\end{align*}
Therefore, since $z\in \ell_{1}(J)$ and $x(j)=0$ for all $j\in J\setminus F$, 
we obtain
\begin{equation*}
	\lim_{\beta}h_{y_{\beta}}(x) = \sum_{j\in J} \abs{x(j)-z(j)} - \sum_{j\in J}\abs{z(j)} = h_{z}(x).
\end{equation*}
Finally, since $c_{00}(J)$ is dense in $\ell_{1}(J)$
and $\{h_{y_{\beta}}\}_{\beta}$ is a family of $1$-Lipschitz functions 
on $\ell_{1}(J)$, the claim follows.
\end{proof}
%*********************************************************************************
\begin{thrm}\label{Thml1}
The metric compactification of $\ell_{1}(J)$ is given by
\begin{equation}\label{l1horoset}
		\overline{\ell_{1}(J)}^{h}=\left\{h^{\{I,\varepsilon,z\}}\in\R^{\ell_{1}(J)} \,\bigg\vert\,
						\begin{aligned}
						   \emptyset\subseteq  I \subseteq  & J,\\
						   \varepsilon\in\{-1,+1\}^{I},\;
						    & z\in\R^{J\setminus I} 
						\end{aligned}						    
						    \right\},
\end{equation}
where for every $x\in\ell_{1}(J)$
\begin{align*}
		h^{\{I,\varepsilon,z\}}(x) =\sum_{j\in I}\varepsilon(j)x(j) 
						+ \sum_{j\in J\setminus I}\left( \abs{x(j)-z(j)}-\abs{z(j)} \right).
\end{align*}
\end{thrm}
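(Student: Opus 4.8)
The plan is to prove the two inclusions separately. For the inclusion $\supseteq$, I would show that every function of the form $h^{\{I,\varepsilon,z\}}$ with $z \in \R^{J\setminus I}$ actually arises as a pointwise limit of a net $\{h_{y_\beta}\}_\beta$, hence lies in the closure $\overline{\ell_1(J)}^h$. Given the data $(I,\varepsilon,z)$, the natural candidate is to pick, for each finite $F \subseteq J$ and each $n \in \N$, the vector $y_{F,n} \in c_{00}(J)$ that equals $z(j)$ on $F \cap (J\setminus I)$, equals $-\varepsilon(j)\,n$ on $F \cap I$, and equals $0$ elsewhere; one then directs the index set $(F,n)$ by $(F,n) \preceq (F',n')$ iff $F \subseteq F'$ and $n \le n'$. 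Using \Cref{1-dimhoro} coordinatewise together with the density of $c_{00}(J)$ in $\ell_1(J)$ and the uniform $1$-Lipschitz property of the $h_y$'s — exactly the two ingredients already exploited in \Cref{lemmaAl1} — one checks $h_{y_{F,n}} \to h^{\{I,\varepsilon,z\}}$ pointwise on $\ell_1(J)$. One must also verify that $h^{\{I,\varepsilon,z\}}$ is a well-defined real-valued function on all of $\ell_1(J)$, not merely on $c_{00}(J)$: since $x \in \ell_1(J)$ the series $\sum_{j\in I}\varepsilon(j)x(j)$ converges absolutely, and $\sum_{j\in J\setminus I}(\abs{x(j)-z(j)}-\abs{z(j)})$ converges because each summand is bounded in absolute value by $\abs{x(j)}$.

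For the inclusion $\subseteq$, take an arbitrary $h \in \overline{\ell_1(J)}^h$. By definition $h$ is a pointwise limit of a net $h_{y_\alpha}$ with $y_\alpha \in \ell_1(J)$; passing to the subnet supplied by \Cref{lemmaAl1} shows $h = h^{\{I,\varepsilon,z\}}$ for some admissible triple $(I,\varepsilon,z)$, which is precisely the right-hand side of \eqref{l1horoset}. Thus $\overline{\ell_1(J)}^h$ is contained in that set, and combined with the first part the two sets coincide. In other words, \Cref{lemmaAl1} already does all the heavy lifting for one direction and the realizability argument above handles the other.

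The main obstacle I anticipate is not in \Cref{lemmaAl1} — that is in hand — but in the $\supseteq$ direction, specifically in being careful that the constructed net genuinely converges when $I$ is infinite (or even uncountable). The coordinates in $I$ require sending $y_\beta(j)$ to $\pm\infty$, and this can only be done finitely many coordinates at a time within $c_{00}(J)$, so the double index $(F,n)$ is essential; one must confirm that along this directed set the tail behaviour is uniform enough that the limit on a general $x \in \ell_1(J)$ (approximated by elements of $c_{00}(J)$) is the claimed value. The $1$-Lipschitz estimate $\abs{h_{y_\beta}(x) - h_{y_\beta}(x')} \le \norm{x-x'}_1$ reduces this to convergence on the dense set $c_{00}(J)$, where only finitely many coordinates are active, so the difficulty is genuinely bookkeeping rather than analysis. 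It is also worth remarking that one could instead observe that $\{h^{\{I,\varepsilon,z\}} : (I,\varepsilon,z) \text{ admissible}\}$ is already closed in the pointwise topology and contains all internal functionals $h_y$, giving $\overline{\ell_1(J)}^h \subseteq$ this set without reusing \Cref{lemmaAl1}; but proving closedness directly seems no easier than the route above, so I would keep the argument as stated.
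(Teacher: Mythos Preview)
Your proposal is correct and follows essentially the same route as the paper: the inclusion $\subseteq$ is handled exactly as you say, by invoking \Cref{lemmaAl1} on a subnet, and the inclusion $\supseteq$ is obtained by exhibiting an explicit net in $c_{00}(J)$, verifying convergence on $c_{00}(J)$ via \Cref{1-dimhoro}, and extending to $\ell_1(J)$ by density and the $1$-Lipschitz property. The only cosmetic difference is that the paper uses a single index $F$ (finite subsets of $J$ ordered by inclusion) and sets the $I$-coordinates to $-\varepsilon(j)\,\#F$, tying the magnitude to the cardinality of $F$, whereas you decouple the two roles with a product index $(F,n)$; both devices achieve the same effect.
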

\begin{proof}
%[Proof of \Cref{Thml1}]
Let $h$ be any element of $\overline{\ell_{1}(J)}^{h}$. 
Then there exists a net $\{y_{\alpha}\}_{\alpha}$ in
$\ell_{1}(J)$ such that $h_{y_{\alpha}}\underset{\alpha}\longrightarrow h$
pointwise on $\ell_{1}(J)$.
By \Cref{lemmaAl1}, there exists a 
subnet $\{h_{y_{\beta}}\}_{\beta}$ of $\{h_{y_{\alpha}}\}_{\alpha}$, 
a subset $I$ of $J$, a vector of signs $\varepsilon\in \{-1,+1\}^{I}$,
and a vector $z\in\R^{J\setminus I}$ such that for every 
$x\in\ell_{1}(J)$
\begin{align*}
	\lim_{\beta}h_{y_{\beta}}(x)=
		h^{\{I,\varepsilon,z\}}(x)=\sum_{j\in I}\varepsilon(j)x(j) 
						+ \sum_{j\in J\setminus I}\big( \abs{x(j)-z(j)}-\abs{z(j)} \big).
\end{align*} 
Then $h=h^{\{I,\varepsilon,z\}}$ and hence $h$ belongs to the set 
on the right-hand side of (\ref{l1horoset}).

Suppose now that $h\in\R^{\ell_{1}(J)}$ is defined 
for every $x\in\ell_{1}(J)$ by 
\begin{align}\label{liml1}
	h(x)=\sum_{j\in I}\varepsilon(j)x(j) 
						+ \sum_{j\in J\setminus I}\big( \abs{x(j)-z(j)}-\abs{z(j)} \big),
\end{align}
for some subset $I$ of $J$, some $\varepsilon\in\{-1,+1\}^{I}$, and 
some $z\in\R^{J\setminus I}$.
Then for each finite subset $F$ (with cardinality $\#F$) of $J$ define 
\begin{align*}
		y_{F}(j):=\begin{cases}
						-\varepsilon(j)\#F, & \text{ if } j\in F\cap I,\\
						z(j), &	\text{ if } j\in F\cap(J\setminus I),\\
						0, &	\text{ if } j\in J\setminus F.
					\end{cases}	
\end{align*} 
With set inclusion $\subseteq$ as a partial order on the set of all finite subsets $F$
of $J$, the set $\{y_{F}\}_{F}$ defines a net in $\ell_{1}(J)$ with norm
\begin{equation*}
	\norm{y_{F}}_{1}=\sum_{j\in J}\abs{y_{F}(j)}
					=\sum_{j\in F\cap I} \#F
				 	+ \sum_{j\in F\cap(J\setminus I)}\abs{z(j)}.
\end{equation*} 
We claim that for every $x\in \ell_{1}(J)$ 
we have $h_{y_{F}}(x)\underset{F}\longrightarrow h(x)$, 
where $h$ is given in (\ref{liml1}). If we prove this
for all $x\in c_{00}(J)$, the claim follows readily from the 
fact that $c_{00}(J)$ is dense in $\ell_{1}(J)$ and
the net $\{h_{y_{F}}\}_{F}$ is a family of
$1$-Lipschitz functions on $\ell_{1}(J)$. Let $x$ be 
any element in $c_{00}(J)$. By \Cref{sumC00}, there
exists a finite set $G$ of $J$ such that
\begin{align}\label{sum0}
	h_{y_{F}}(x)&=\sum_{j\in G}\big( \abs{x(j)-y_{F}(j)}-\abs{y_{F}(j)}\big)\nonumber\\
				&=\sum_{j\in G\cap F\cap I}\big( \big\vert -\varepsilon(j)x(j)-\#F \big\vert - \#F \big)\\
				 &\qquad +\sum_{j\in G\cap F\cap(J\setminus I)} \big( \abs{x(j)-z(j)}-\abs{z(j)} \big)\nonumber
				  +\sum_{j\in G\cap (J\setminus F)}\abs{x(j)}.	
\end{align}
As $F$ gets larger, the third sum in (\ref{sum0}) converges
to $0$, while the second sum converges to 
$\sum_{j\in G\cap(J\setminus I)} \big( \abs{x(j)-z(j)}-\abs{z(j)} \big)$. 
By \Cref{1-dimhoro}, 
each term of the first sum in (\ref{sum0}) converges 
to $\varepsilon(j)x(j)$, as $F$ gets larger. Moreover, 
larger subsets $F$ of $J$ will eventually contain the finite set $G$. 
Hence, the first sum of (\ref{sum0}) converges to 
$\sum_{j\in G\cap I} \varepsilon(j)x(j)$, as $F$ gets larger.
We have therefore shown that for every $x\in c_{00}(J)$,
\begin{equation*}
	h_{y_{F}}(x)\underset{F}\longrightarrow 
		\sum_{j\in G\cap I} \varepsilon(j)x(j) + \sum_{j\in G\cap(J\setminus I)} \big( \abs{x(j)-z(j)}-\abs{z(j)} \big)
		= h(x),
\end{equation*}
where $G$ is the finite set $\{j\in J \mid x(j)\neq 0\}$. 
This concludes the proof of the theorem.
\end{proof}

By a simple inspection of all the elements of
the set (\ref{l1horoset}) we obtain the following.
\begin{cor}\label{corl1}
The set of finite metric functionals on $\ell_{1}(J)$ consists only of
internal ones. That is, $\,\overline{\ell_{1}(J)}^{h,F}=\{h_{y} \,\big\vert\, y\in \ell_{1}(J) \}$.
\end{cor}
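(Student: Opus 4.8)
The plan is to read off the classification of finite metric functionals directly from the explicit description in \Cref{Thml1}. By that theorem every $h\in\overline{\ell_{1}(J)}^{h}$ has the form $h=h^{\{I,\varepsilon,z\}}$ for some $I\subseteq J$, some $\varepsilon\in\{-1,+1\}^{I}$, and some $z\in\R^{J\setminus I}$. So the first step is to decide, for which triples $(I,\varepsilon,z)$ the function $h^{\{I,\varepsilon,z\}}$ is bounded below on $\ell_{1}(J)$, and then to check that in precisely those cases $h^{\{I,\varepsilon,z\}}$ coincides with some internal $h_{y}$.

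First I would dispose of the case $I\neq\emptyset$. Pick any $j_{0}\in I$ and test $h^{\{I,\varepsilon,z\}}$ on the sequence of vectors $t\,e_{j_{0}}$, $t\in\R$, where $e_{j_{0}}$ is the standard unit vector; this is legitimate since $e_{j_{0}}\in c_{00}(J)\subset\ell_{1}(J)$. Along this line the first sum contributes $\varepsilon(j_{0})t$, which tends to $-\infty$ as $t\to-\varepsilon(j_{0})\infty$, while the second sum is a fixed constant (namely $-\sum_{j\in J\setminus I}\abs{z(j)}$ if that is finite, or one can instead use $t e_{j_0}$ which leaves the $J\setminus I$ coordinates at $0$). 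Hence $\inf_{x}h^{\{I,\varepsilon,z\}}(x)=-\infty$, so $h^{\{I,\varepsilon,z\}}\in\overline{\ell_{1}(J)}^{h,\infty}$ whenever $I\neq\emptyset$.

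Next I would treat the case $I=\emptyset$. Then $h^{\{\emptyset,\varepsilon,z\}}(x)=\sum_{j\in J}\big(\abs{x(j)-z(j)}-\abs{z(j)}\big)$ for a vector $z\in\R^{J}$. If $z\notin\ell_{1}(J)$, I would show this expression fails to be a well-defined real-valued function on all of $\ell_{1}(J)$ (equivalently it is not in $\R^{\ell_{1}(J)}$, so it is not even a candidate metric functional); the point is that for $x=0$ one would need $\sum_{j}(\abs{z(j)}-\abs{z(j)})=0$ which is fine, but evaluating on a suitable $x\in\ell_{1}(J)$ supported where $z$ is large forces a divergent sum — more directly, membership in $\overline{\ell_1(J)}^h$ already forces, via \Cref{Thml1}, the function to be real-valued, and one checks $z\in\ell_1(J)$ is necessary for that. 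If $z\in\ell_{1}(J)$, then $h^{\{\emptyset,\varepsilon,z\}}=h_{z}$ by the defining formula, which is an internal metric functional and clearly satisfies $\inf_{x}h_{z}(x)=-\norm{z}_{1}>-\infty$.

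Combining: $h^{\{I,\varepsilon,z\}}$ is a finite metric functional if and only if $I=\emptyset$ and $z\in\ell_{1}(J)$, in which case it equals $h_{z}$; hence $\overline{\ell_{1}(J)}^{h,F}=\{h_{y}\mid y\in\ell_{1}(J)\}$. The only mildly delicate point is the bookkeeping in the $I=\emptyset$, $z\notin\ell_{1}(J)$ sub-case — making precise that such an $h$ cannot arise as an element of $\overline{\ell_{1}(J)}^{h}$ at all; but this is already implicit in \Cref{Thml1}, whose displayed set only produces genuine real-valued functions, so in the end the corollary is a pure inspection of the list in \eqref{l1horoset} together with the elementary line-test above. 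The main (and only real) obstacle is therefore simply verifying the unboundedness claim cleanly when $I\neq\emptyset$.
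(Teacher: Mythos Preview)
Your overall strategy---read off the answer from the list \eqref{l1horoset} in \Cref{Thml1}---is exactly what the paper does (it literally says ``by a simple inspection''), and your handling of the case $I\neq\emptyset$ is fine: for $x=t\,e_{j_{0}}$ with $j_{0}\in I$ the second sum is actually $0$ (not $-\sum_{j}\abs{z(j)}$), since $x(j)=0$ for $j\in J\setminus I$, but that only strengthens your conclusion that $h^{\{I,\varepsilon,z\}}(t\,e_{j_{0}})=\varepsilon(j_{0})t\to-\infty$.

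There is, however, a genuine error in the sub-case $I=\emptyset$, $z\notin\ell_{1}(J)$. You claim such an $h^{\{\emptyset,\varepsilon,z\}}$ ``fails to be a well-defined real-valued function'' and hence is not in $\overline{\ell_{1}(J)}^{h}$. This is false on both counts. Since $\big\lvert\,\abs{x(j)-z(j)}-\abs{z(j)}\,\big\rvert\leq\abs{x(j)}$ for every $j$, the series $\sum_{j\in J}\big(\abs{x(j)-z(j)}-\abs{z(j)}\big)$ converges absolutely for every $x\in\ell_{1}(J)$, regardless of whether $z\in\ell_{1}(J)$. And \Cref{Thml1} explicitly allows $z\in\R^{J}$, not just $z\in\ell_{1}(J)$, so these functions \emph{are} elements of $\overline{\ell_{1}(J)}^{h}$. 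What you need to show instead is that they are unbounded below: for any finite $F\subseteq J$ let $x_{F}\in c_{00}(J)$ be the restriction of $z$ to $F$; then $h^{\{\emptyset,\varepsilon,z\}}(x_{F})=-\sum_{j\in F}\abs{z(j)}$, and this tends to $-\infty$ over the net of finite subsets precisely when $z\notin\ell_{1}(J)$. With this correction your case analysis is complete and the corollary follows.
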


We will see in the next sections that \Cref{corl1} does not hold 
for infinite-dimensional Hilbert spaces and $\ell_{p}(J)$ with $1<p<\infty$.
%********************************************************************************************************************
\section{The metric compactification of infinite-dimensional Hilbert spaces}\label{horoHilbert}
Throughout this section we will assume that
$\mathcal{H}$ is an infinite-dimensional real Hilbert space with inner product $\iprod{\cdot,\cdot}$
and norm $\norm{x}=\iprod{x,x}^{1/2}$ for all $x\in\mathcal{H}$. 
For each $y\in\mathcal{H}$ we denote by $h_{y}$ the function on 
$\mathcal{H}$ defined by
\[
	x\mapsto h_{y}(x):=\norm{x-y}-\norm{y}.
\]
%************************************************************************************
\begin{lem}\label{horoHilbA}
Let $\{y_{\alpha}\}_{\alpha}$ be a bounded net in 
$\mathcal{H}$. Then there exists a subnet 
$\{y_{\beta}\}_{\beta}$, a vector $z\in\mathcal{H}$
and a real number $c\geq\norm{z}$
such that the net $\{h_{y_{\beta}}\}_{\beta}$ 
converges pointwise on $\mathcal{H}$ to the function 
\begin{align}
	x\mapsto h^{\{z,c\}}(x):=\big( \norm{x}^{2}-2\iprod{x,z}+c^{2} \big)^{1/2} - c.
\end{align}
\end{lem}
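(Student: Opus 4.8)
The plan is to reduce the statement to the behaviour of the scalar function $t\mapsto\norm{x-ty_\beta/\norm{y_\beta}}-\norm{y_\beta}$, but more directly I would proceed as follows. Since $\{y_\beta\}_\beta$ is bounded, pass to a subnet (without relabelling) along which $\norm{y_\beta}\to c$ for some $c\ge 0$; by \Cref{horoHilbA}'s predecessor-style reasoning (the lemma right after the characterization of finite metric functionals) any pointwise limit of such a net is automatically a finite metric functional. Next, since $\mathcal H$ is a Hilbert space, hence reflexive, its closed balls are weakly compact, so passing to a further subnet we may assume $y_\beta\overset{w}\longrightarrow z$ for some $z\in\mathcal H$, and by weak lower semicontinuity of the norm $\norm{z}\le\liminf_\beta\norm{y_\beta}=c$, which gives the required inequality $c\ge\norm{z}$.

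Then I would compute the pointwise limit directly using the parallelogram/polarization identity. For each fixed $x\in\mathcal H$ and each $\beta$,
\begin{align*}
	\norm{x-y_\beta}^2 &= \norm{x}^2 - 2\iprod{x,y_\beta} + \norm{y_\beta}^2.
\end{align*}
As $\beta$ runs through the subnet, $\iprod{x,y_\beta}\to\iprod{x,z}$ by weak convergence and $\norm{y_\beta}^2\to c^2$, so $\norm{x-y_\beta}^2\to\norm{x}^2-2\iprod{x,z}+c^2$. The quantity inside the limit is nonnegative (it is a limit of the nonnegative quantities $\norm{x-y_\beta}^2$, so it is $\ge 0$; one can also check this is $\ge\norm{x-z}^2+(c^2-\norm{z}^2)\ge 0$ using $c\ge\norm{z}$), so taking square roots — which is continuous on $[0,\infty)$ — gives $\norm{x-y_\beta}\to(\norm{x}^2-2\iprod{x,z}+c^2)^{1/2}$. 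Subtracting $\norm{y_\beta}\to c$ yields
\begin{align*}
	h_{y_\beta}(x) = \norm{x-y_\beta}-\norm{y_\beta} \;\underset{\beta}\longrightarrow\; \big(\norm{x}^2-2\iprod{x,z}+c^2\big)^{1/2}-c = h^{\{z,c\}}(x),
\end{align*}
as claimed.

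The main subtlety — and really the only place one must be careful — is the order in which the two subnet extractions are performed and the fact that weak convergence, unlike the pointwise convergence on $\R^J$ used in the $\ell_1$ case, is exactly what is needed here: the functionals $x\mapsto\iprod{x,\cdot}$ are precisely the elements of $\mathcal H^*$, so weak-$*$/weak compactness of the ball (reflexivity) hands us convergence of $\iprod{x,y_\beta}$ for \emph{all} $x\in\mathcal H$ simultaneously, not just for $x$ in a dense subspace. This means no density-plus-equicontinuity argument is even required to pass to the full space, in contrast to \Cref{lemmaBl1}. I would also remark that the hypothesis that $\mathcal H$ be infinite-dimensional plays no role in this particular lemma; it matters only for the converse direction (exhibiting nets realizing each $h^{\{z,c\}}$) and for ensuring the metric functionals at infinity, treated elsewhere in the section, are genuinely new.
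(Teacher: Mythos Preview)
Your proof is correct and follows essentially the same route as the paper's: extract a weakly convergent subnet (reflexivity/weak compactness of balls) and a norm-convergent subnet, then expand $\norm{x-y_\beta}^2$ via the inner product and pass to the limit. The only cosmetic difference is the order of the two subnet extractions, which is immaterial; your additional remarks on nonnegativity, on why no density-plus-equicontinuity step is needed here, and on the irrelevance of infinite-dimensionality for this particular lemma are all accurate.
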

\begin{proof}
Let $\{y_{\alpha}\}_{\alpha}$ be a bounded net in 
$\mathcal{H}$. Since $\mathcal{H}$ is reflexive, 
by Alaoglu's theorem there exists a subnet 
$\{y_{\beta}\}_{\beta}$ and a vector $z\in\mathcal{H}$
such that
\begin{equation}\label{weaklimH}
	y_{\beta}\overset{w}{\underset{\beta}\longrightarrow} z.
\end{equation}
By letting $c:=\liminf_{\beta}\norm{y_{\beta}}$
we have $\norm{z}\leq c$. Now, 
since $\{\norm{y_{\beta}}\}_{\beta}$ is 
a bounded net in $\R$, by passing to a subnet
we may assume that
\begin{equation}\label{limnorm}
	\lim_{\beta}\norm{y_{\beta}} = c. 
\end{equation}
Let $x$ be any element in $\mathcal{H}$. Then, by (\ref{weaklimH}) and 
(\ref{limnorm}), we obtain
\begin{align*}
	h_{y_{\beta}}(x) &= \norm{x-y_{\beta}}-\norm{y_{\beta}} \\
		 			&= \big( \norm{x}^{2}-2\iprod{x,y_{\beta}}
		 						+\norm{y_{\beta}}^{2} \big)^{1/2} - \norm{y_{\beta}} \\
		 			&\underset{\beta}\longrightarrow 
		 				\left( \norm{x}^{2}-2\iprod{x,z}+c^{2} \right)^{1/2} - c
		 			\;= h^{\{z,c\}}(x).
\end{align*}
\end{proof}
\begin{remark}[Radon-Riesz property]\label{RRprop}
The weak limit (\ref{weaklimH}) and the limit (\ref{limnorm}) 
of the norms $\norm{y_{\beta}}$ in \Cref{horoHilbA} imply
\begin{align*}
	\norm{y_{\beta}-z}^{2} &= \norm{y_{\beta}}^{2}-2\iprod{y_{\beta},z}+\norm{z}^{2}\\
							&\underset{\beta}\longrightarrow c^{2}-2\iprod{z,z}+\norm{z}^{2}\\
							&= c^{2}-\norm{z}^{2}.
\end{align*}
Hence $y_{\beta}$ converges
strongly to $z$ if and only if 
$c=\norm{z}$. It is clear that $h^{\{z,\norm{z}\}}=h_{z}$.
\end{remark}
%*******************************************************************************************
\begin{lem}\label{horoHilbB}
Let $\{y_{\alpha}\}_{\alpha}$ be a net in $\mathcal{H}$ such that
$\norm{y_{\alpha}}\underset{\alpha}\longrightarrow +\infty$.
Then there exists a subnet $\{y_{\beta}\}_{\beta}$ and
a vector $z\in\mathcal{H}$ with $\norm{z}\leq 1$
such that the net $\{h_{y_{\beta}}\}_{\beta}$ converges
pointwise on $\mathcal{H}$ to the function
\begin{align}
	x\mapsto h^{\{z\}}(x):=-\iprod{x,z}.
\end{align}
\end{lem}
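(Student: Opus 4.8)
The plan is to mimic the structure of \Cref{horoHilbA}, but now starting from the hypothesis $\norm{y_{\alpha}}\to+\infty$ and normalizing before extracting a weak limit. First I would pass to a subnet along which the unit vectors $u_{\beta}:=y_{\beta}/\norm{y_{\beta}}$ converge; since the closed unit ball of a Hilbert space is weakly compact (reflexivity plus Alaoglu), there is a subnet with $u_{\beta}\overset{w}{\underset{\beta}\longrightarrow} z$ for some $z\in\mathcal{H}$, and weak lower semicontinuity of the norm gives $\norm{z}\leq\liminf_{\beta}\norm{u_{\beta}}=1$. The point of normalizing is that $\iprod{x,y_{\beta}}=\norm{y_{\beta}}\iprod{x,u_{\beta}}$, so this quantity grows linearly in $\norm{y_{\beta}}$ with coefficient tending to $\iprod{x,z}$, which is exactly what survives after the subtraction.

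Next I would compute $h_{y_{\beta}}(x)$ directly. Writing $t_{\beta}:=\norm{y_{\beta}}\to+\infty$, we have
\begin{align*}
	h_{y_{\beta}}(x) &= \norm{x-y_{\beta}}-\norm{y_{\beta}}
		= \big( \norm{x}^{2}-2t_{\beta}\iprod{x,u_{\beta}}+t_{\beta}^{2} \big)^{1/2} - t_{\beta}.
\end{align*}
I would multiply and divide by the conjugate to avoid subtracting two large quantities, obtaining
\begin{align*}
	h_{y_{\beta}}(x) &= \frac{\norm{x}^{2}-2t_{\beta}\iprod{x,u_{\beta}}}
		{\big( \norm{x}^{2}-2t_{\beta}\iprod{x,u_{\beta}}+t_{\beta}^{2} \big)^{1/2} + t_{\beta}}
		= \frac{\frac{\norm{x}^{2}}{t_{\beta}}-2\iprod{x,u_{\beta}}}
		{\big( \frac{\norm{x}^{2}}{t_{\beta}^{2}}-\frac{2\iprod{x,u_{\beta}}}{t_{\beta}}+1 \big)^{1/2} + 1}.
\end{align*}
As $\beta$ increases, $t_{\beta}\to+\infty$ and $\iprod{x,u_{\beta}}\to\iprod{x,z}$ (with $\{\iprod{x,u_{\beta}}\}_{\beta}$ bounded, being a weakly convergent net), so the numerator tends to $-2\iprod{x,z}$ and the denominator tends to $2$; hence $h_{y_{\beta}}(x)\to-\iprod{x,z}=h^{\{z\}}(x)$ for every $x\in\mathcal{H}$.

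The only genuine subtlety, rather than an obstacle, is the bookkeeping of iterated subnets: one first extracts a subnet for the weak convergence of $u_{\beta}$, and this is enough since the norm convergence $t_{\beta}\to+\infty$ is automatic from the hypothesis on the original net (any subnet of a net diverging to $+\infty$ still diverges to $+\infty$), so no further passage to a subnet is needed, unlike in \Cref{horoHilbA}. I would also remark that the boundedness of $\{\iprod{x,u_{\beta}}\}_{\beta}$ used in the limit computation follows from $\abs{\iprod{x,u_{\beta}}}\leq\norm{x}\norm{u_{\beta}}=\norm{x}$ by Cauchy--Schwarz, so every estimate is uniform and the convergence of the quotient is legitimate. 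This completes the proof.
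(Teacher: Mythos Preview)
Your proof is correct and follows the same strategy as the paper: normalize $y_{\beta}$ to a unit vector, extract a weakly convergent subnet via reflexivity and Alaoglu, and then compute the limit of $h_{y_{\beta}}(x)$. The only difference is cosmetic: where the paper applies the Taylor expansion $\sqrt{1+a}=1+a/2+O(a^{2})$ to $\norm{x-y_{\beta}}$, you instead rationalize by multiplying and dividing by the conjugate, which is arguably cleaner since it avoids the big-$O$ bookkeeping.
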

\begin{proof}
Since $\norm{y_{\alpha}}\underset{\alpha}\longrightarrow +\infty$,
by passing to a subnet we may assume that 
$\norm{y_{\alpha}} > 0$ for all $\alpha$.
Define $z_{\alpha}:= y_{\alpha}/\norm{y_{\alpha}}$ 
for all $\alpha$. Since  $\mathcal{H}$ is reflexive,
it follows from Alaoglu's theorem that 
there exists a subnet $\{z_{\beta}\}_{\beta}$ 
and a vector $z\in\mathcal{H}$ with $\norm{z}\leq 1$ such that
$z_{\beta} \overset{w}{\underset{\beta}\longrightarrow} z$. 
Let $x$ be any vector in $\mathcal{H}$. Then 
\begin{equation}\label{normxy}
\begin{aligned}
	\norm{x-y_{\beta}} &= \big( \norm{x}^{2}-2\iprod{x,y_{\beta}}+\norm{y_{\beta}}^{2} \big)^{1/2}\\
					   &= \norm{y_{\beta}} \bigg( \frac{\norm{x}^{2}}{\norm{y_{\beta}}^{2}}
						-\frac{2\iprod{x,z_{\beta}}}{\norm{y_{\beta}}}+1 \bigg)^{1/2}.		
\end{aligned}							   
\end{equation} 
It is clear that $\norm{y_{\beta}}\underset{\beta}\longrightarrow +\infty$
implies
\begin{equation}\label{smalla}
	\frac{\norm{x}^{2}}{\norm{y_{\beta}}^{2}} - \frac{2\iprod{x,z_{\beta}}}{\norm{y_{\beta}}} \underset{\beta}\longrightarrow 0.
\end{equation}
Recall the Taylor expansion 
$\sqrt{a + 1}=1+a/2+ O(a^{2})$, when $a$ is small 
enough. Hence by (\ref{smalla}) we can write 
(\ref{normxy}) as follows
\begin{align*}
	\norm{x-y_{\beta}} &= \norm{y_{\beta}}\bigg( 1 + \frac{1}{2}
							\bigg(\frac{\norm{x}^{2}}{\norm{y_{\beta}}^{2}}
							-\frac{2\iprod{x,z_{\beta}}}{\norm{y_{\beta}}}\bigg)
					    	+ O\bigg(\frac{1}{\norm{y_{\beta}}^{2}}\bigg)\bigg)\\
				   &= \norm{y_{\beta}} + \frac{1}{2}\bigg(\frac{\norm{x}^{2}}
				   		{\norm{y_{\beta}}}-2\iprod{x,z_{\beta}}\bigg) 
				   		+ O\bigg(\frac{1}{\norm{y_{\beta}}}\bigg),	    	
\end{align*}
as $\norm{y_{\beta}}\underset{\beta}\longrightarrow +\infty$.
Therefore, we obtain
\begin{align*}
	 \lim_{\beta} h_{y_{\beta}}(x) &= \lim_{\beta} \big[ \norm{x-y_{\beta}}- \norm{y_{\beta}}\big]\\
				&= \lim_{\beta} \bigg[ \frac{1}{2}\bigg(\frac{\norm{x}^{2}}
				   		{\norm{y_{\beta}}}-2\iprod{x,z_{\beta}}\bigg) 
				   		+ O\bigg(\frac{1}{\norm{y_{\beta}}}\bigg) \bigg]\\ 
				& = - \iprod{x,z} \;= h^{\{z\}}(x).
\end{align*}
\end{proof}
%*******************************************************************************************
\begin{thrm}\label{Hilberthoro}
The metric compactification 
$\overline{\mathcal{H}}^{h}=\overline{\mathcal{H}}^{h,F}\sqcup\overline{\mathcal{H}}^{h,\infty}$
of an infinite-dimensional real Hilbert space 
$\mathcal{H}$ is given by 
\begin{align*}
	\overline{\mathcal{H}}^{h,F} &= \left\{
						 h^{\{z,c\}}\in\R^{\mathcal{H}}
							\,\mathrel{\big\vert}\,	
								z\in \mathcal{H},\;c\geq\norm{z}	
						\right\}\cup\{0\}, \\
	\overline{\mathcal{H}}^{h,\infty} &= \left\{ 
						h^{\{z\}}\in\R^{\mathcal{H}}
							\,\mathrel{\big\vert}\,						
								z\in \mathcal{H},\; 0<\norm{z}\leq 1
						\right\},
\end{align*}
where for every $x\in\mathcal{H}$
\begin{align*}
		h^{\{z,c\}}(x) &= \big( \norm{x}^{2}-2\iprod{x,z}+c^{2} \big)^{1/2} - c, \\
		h^{\{z\}}(x) &= -\iprod{x,z}.
\end{align*}
\end{thrm}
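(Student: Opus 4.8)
The plan is to prove the two set inclusions separately. Lemmas \ref{horoHilbA} and \ref{horoHilbB} carry all the analytic work for the inclusion "$\subseteq$", while a short explicit construction of sequences handles "$\supseteq$".

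For "$\subseteq$", let $h\in\overline{\mathcal H}^{h}$ and choose a net $\{y_{\alpha}\}_{\alpha}$ in $\mathcal H$ with $h_{y_{\alpha}}\to h$ pointwise. The net $\{\norm{y_{\alpha}}\}_{\alpha}$ takes values in the compact space $[0,+\infty]$, so after passing to a subnet we may assume $\norm{y_{\alpha}}\to L$ for some $L\in[0,+\infty]$. If $L<\infty$, the net $\{y_{\alpha}\}_{\alpha}$ is bounded and \Cref{horoHilbA} yields a further subnet along which $h_{y_{\beta}}\to h^{\{z,c\}}$ for some $z\in\mathcal H$ and $c\geq\norm z$; since every subnet of a convergent net has the same limit, $h=h^{\{z,c\}}$, and writing $\norm{x}^{2}-2\iprod{x,z}+c^{2}=\norm{x-z}^{2}+c^{2}-\norm z^{2}$ shows $\inf_{x}h^{\{z,c\}}(x)\geq(c^{2}-\norm z^{2})^{1/2}-c$ is finite, so $h\in\overline{\mathcal H}^{h,F}$. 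If $L=+\infty$, then $\norm{y_{\alpha}}\to+\infty$ and \Cref{horoHilbB} yields a further subnet along which $h_{y_{\beta}}\to h^{\{z\}}$ with $\norm z\leq 1$, so $h=h^{\{z\}}$; if $z=0$ this is the zero functional, which lies in $\overline{\mathcal H}^{h,F}$, while if $z\neq 0$ then evaluating at $x=tz$ and letting $t\to+\infty$ gives $\inf_{x}h^{\{z\}}(x)=-\infty$, so $h\in\overline{\mathcal H}^{h,\infty}$ with $0<\norm z\leq 1$. This proves every metric functional has one of the stated forms and, simultaneously, that the abstract splitting $\overline{\mathcal H}^{h}=\overline{\mathcal H}^{h,F}\sqcup\overline{\mathcal H}^{h,\infty}$ coincides with the displayed one.

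For "$\supseteq$", I produce for each prescribed datum a sequence $\{y_{n}\}_{n}$ with $h_{y_{n}}\to h$. I use two standard facts: in an infinite-dimensional $\mathcal H$ the orthogonal complement of any vector contains an orthonormal sequence $\{e_{n}\}_{n}$, and any orthonormal sequence satisfies $\iprod{x,e_{n}}\to 0$ for every fixed $x$, hence converges weakly to $0$. Given $z\in\mathcal H$ and $c\geq\norm z$, fix such an $\{e_{n}\}_{n}$ in $z^{\perp}$ and set $y_{n}:=z+\sqrt{c^{2}-\norm z^{2}}\,e_{n}$; then $\norm{y_{n}}=c$ for all $n$ and $y_{n}\to z$ weakly, so the computation in the proof of \Cref{horoHilbA} gives $h_{y_{n}}(x)\to h^{\{z,c\}}(x)$ for every $x$ (no subnet is needed, the weak convergence having been arranged by hand). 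For the zero functional take $y_{n}:=n e_{n}$: then $\norm{y_{n}}\to+\infty$ and $y_{n}/\norm{y_{n}}=e_{n}\to 0$ weakly, so the computation in the proof of \Cref{horoHilbB} gives $h_{y_{n}}\to h^{\{0\}}=0$. Finally, given $z$ with $0<\norm z\leq 1$, put $y_{n}:=n\big(z+\sqrt{1-\norm z^{2}}\,e_{n}\big)$ with $\{e_{n}\}_{n}$ orthonormal in $z^{\perp}$; then $\norm{y_{n}}=n\to+\infty$ while $y_{n}/\norm{y_{n}}=z+\sqrt{1-\norm z^{2}}\,e_{n}\to z$ weakly, so the proof of \Cref{horoHilbB} gives $h_{y_{n}}\to h^{\{z\}}$ (when $\norm z=1$ this degenerates to $y_{n}=nz$).

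I do not anticipate a genuine obstacle, since the two lemmas already supply the estimates; the only real content of "$\supseteq$" is the weak-limit bookkeeping, and the one nontrivial point is realizing a weak limit $z$ of a sequence of \emph{unit} vectors with $\norm z<1$, which is exactly where infinite-dimensionality is used. The other point needing care is the degenerate case $z=0$ of \Cref{horoHilbB}: it forces the separate adjunction of $\{0\}$ to the finite part and must be excluded from the infinite part, which is why the constraint there reads $0<\norm z\leq 1$. As a consistency check one notes, via \Cref{RRprop}, that the choice $c=\norm z$ gives $h^{\{z,\norm z\}}=h_{z}$, so the internal metric functionals are recovered inside $\overline{\mathcal H}^{h,F}$.
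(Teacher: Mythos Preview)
Your proof is correct and follows essentially the same approach as the paper: both directions are handled by invoking \Cref{horoHilbA} and \Cref{horoHilbB} for ``$\subseteq$'' and by exhibiting explicit weakly convergent sequences for ``$\supseteq$''. Your one refinement over the paper is choosing the orthonormal sequence $\{e_{n}\}$ to lie in $z^{\perp}$, which makes $\norm{y_{n}}$ equal to $c$ (respectively $n$) exactly rather than only in the limit, slightly streamlining the computation.
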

%*********************************************************************************************
\begin{proof}
%[Proof of Theorem \ref{Hilberthoro}]
Let $h$ be any element of $\overline{\mathcal{H}}^{h}$. 
Then there exists a net $\{y_{\alpha}\}_{\alpha}$ 
in $\mathcal{H}$ such that 
$h_{y_{\alpha}}\underset{\alpha}\longrightarrow h$
pointwise on $\mathcal{H}$. Suppose that the net $\{y_{\alpha}\}_{\alpha}$
is bounded in $\mathcal{H}$. Then it follows from
\Cref{horoHilbA} that there exists a subnet 
$\{y_{\beta}\}_{\beta}$, a vector $z\in\mathcal{H}$,
and a real number $c\geq\norm{z}$
such that 
\begin{align*}
	\lim_{\beta} h_{y_{\beta}}(x) = \big( \norm{x}^{2}-2\iprod{x,z}+c^{2} \big)^{1/2} - c = h^{\{z,c\}}(x),
\end{align*}
for all $x\in\mathcal{H}$. Therefore $h=h^{\{z,c\}}$ 
and hence $h\in\overline{\mathcal{H}}^{h,F}\setminus\{0\}$. Suppose now that the
net $\{y_{\alpha}\}_{\alpha}$ is unbounded in $\mathcal{H}$. Then,
by passing to a subnet we may assume that 
$\norm{y_{\alpha}}\underset{\alpha}\longrightarrow +\infty$.
It follows from \Cref{horoHilbB} that there exists a subnet 
$\{y_{\beta}\}_{\beta}$ and a vector $z\in\mathcal{H}$ 
with $\norm{z}\leq 1$ such that 
\begin{equation*}
	\lim_{\beta} h_{y_{\beta}}(x) = -\iprod{x,z} = h^{\{z\}}(x), 
\end{equation*}
for all $x\in\mathcal{H}$. Hence $h=h^{\{z\}}$ and so 
$h\in\overline{\mathcal{H}}^{h,\infty}\cup\{0\}$. Consequently, 
we have proved that the 
inclusion
$\overline{\mathcal{H}}^{h}\subseteq\overline{\mathcal{H}}^{h,F}\sqcup\overline{\mathcal{H}}^{h,\infty}$
holds.

On the other hand, since $\mathcal{H}$ is infinite-dimensional
and reflexive, there exists a sequence $\{u_{n}\}_{n\in\N}$ 
in $\mathcal{H}$ with $\norm{u_{n}}=1$ for all $n\in\N$ such that 
$u_{n} \overset{w}{\longrightarrow} 0$, as $n\to\infty$. 
Suppose that $h^{\{z,c\}}\in\overline{\mathcal{H}}^{h,F}\setminus\{0\}$ 
for some $z\in\mathcal{H}$ and some $c\geq\norm{z}$. Then 
for each $n\in\N$ define $y_{n}\in\mathcal{H}$ by
\[
	y_{n}:=\big(c^{2}-\norm{z}^{2}\big)^{1/2} u_{n} + z.
\]
It follows that $y_{n} \overset{w}{\longrightarrow} z$ and also
\begin{equation*}
	\norm{y_{n}}^{2}=c^{2}+2\big(c^{2}-\norm{z}^{2}\big)^{1/2}\iprod{z,u_{n}} 
	\longrightarrow c^{2},
\end{equation*} 
as $n\to\infty$. Therefore, for every $x\in\mathcal{H}$
\begin{align*}
	h_{y_{n}}(x)  &= \norm{x-y_{n}} - \norm{y_{n}}\\
					&= \big( \norm{x}^{2}-2\iprod{x,y_{n}}+\norm{y_{n}}^{2} \big)^{1/2} - \norm{y_{n}}\\
					&\underset{n\to\infty}\longrightarrow 
					\big( \norm{x}^{2}-2\iprod{x,z}+c^{2} \big)^{1/2} - c\\
					&= h^{\{z,c\}}(x).
\end{align*}	
Hence $h^{\{z,c\}}$ is an element of $\overline{\mathcal{H}}^{h}$. 
Now, suppose that $h\in\overline{\mathcal{H}}^{h,\infty}\cup\{0\}$. 
Then for every $x\in \mathcal{H}$ we have $h(x)=-\iprod{x,z}$
for some  $z\in\mathcal{H}$ with $\norm{z}\leq 1$. For each 
$n\in\N$ define $\tilde{y}_{n}\in\mathcal{H}$ by
\[
	\tilde{y}_{n}:=n\big(1-\norm{z}^{2}\big)^{1/2} u_{n} + nz. 
\]
Then $\norm{\tilde{y}_{n}}^{2}=n^{2}\big(1+2\big(1-\norm{z}^{2}\big)^{1/2}\iprod{z,u_{n}}\big) 
	\longrightarrow \infty$, as $n\to\infty$.
Furthermore, we have $\tilde{y}_{n}/\norm{\tilde{y}_{n}} \overset{w}\longrightarrow z$, 
as $n\to\infty$. Then by proceeding as in (\ref{normxy}) and (\ref{smalla})
we obtain
\begin{equation*}
	h_{\tilde{y}_{n}}(x) \underset{n\to\infty}\longrightarrow -\iprod{x,z} = h(x),
\end{equation*}	
for all $x\in\mathcal{H}$ so $h$ 
is an element of $\overline{\mathcal{H}}^{h}$. We therefore have proved 
that the inclusion 
$\overline{\mathcal{H}}^{h,F}\sqcup\overline{\mathcal{H}}^{h,\infty}\subseteq\overline{\mathcal{H}}^{h}$
also holds.
\end{proof}
%*******************************************************************************************
\begin{remark}
It readily follows from \Cref{RRprop} that 
$h^{\{z,c\}}$ is an internal metric functional
if and only if $c=\norm{z}$. \Cref{Hilberthoro} 
states that metric functionals on an infinite-dimensional 
real Hilbert space $\mathcal{H}$ are of three types: 
\begin{enumerate}
\item The set $\{h_{y} \,\vert\, y\in\mathcal{H}\}$ 
		contains metric functionals which correspond 
		to strongly convergent nets in $\mathcal{H}$.
\item The set $\left\{ h^{\{z,c\}}\in\R^{\mathcal{H}}
							\,\mathrel{\big\vert}\,	
								z\in \mathcal{H},\;c>\norm{z}	
						\right\}$
		contains \emph{exotic} metric functionals which correspond
		to bounded nets converging weakly but not strongly in $\mathcal{H}$.
\item The set  $\overline{\mathcal{H}}^{h,\infty}\cup\{0\}$
		contains metric functionals which correspond to
		nets in $\mathcal{H}$ with norm tending to infinity.
\end{enumerate}
\end{remark}
%********************************************************************************************
\begin{remark}
For the infinite-dimensional 
real Hilbert space $\mathcal{H}=\ell_{2}(J)$ 
with norm $\norm{\cdot}_{2}$, it follows readily from \Cref{Hilberthoro} 
that the metric compactification 
$\overline{\ell_{2}(J)}^{h} = \overline{\ell_{2}}^{h,F} \sqcup \overline{\ell_{2}}^{h,\infty}$
is given by
\begin{align*}
	\overline{\ell_{2}}^{h,F} &= \left\{ 
						h^{\{z,c\}}\in\R^{\ell_{2}(J)}
							\;\big\vert\; z\in \ell_{2}(J),\; c\geq \norm{z}_{2}
						\right\}\cup\{0\}, \\
	\overline{\ell_{2}}^{h,\infty}  &= \left\{
						h^{\{z\}}\in\R^{\ell_{2}(J)}
							\;\big\vert\; z\in\ell_{2}(J),\; 0<\norm{z}_{2}\leq 1
						\right\},
\end{align*}
where for every $x\in\ell_{2}(J)$
\begin{align*}
	h^{\{z,c\}}(x) &=\big( \norm{x-z}_2^{2}+c^{2}-\norm{z}_2^{2} \big)^{1/2} - c, \\
	    h^{\{z\}}(x) &=-\sum_{j\in J}x(j)z(j). 
\end{align*}
This observation will be useful in \Cref{horolp} 
to identify all the metric functionals
on the remaining $\ell_{p}$ spaces.
\end{remark}
%*********************************************************************************************
\section{The metric compactification of $\ell_{p}$, with $1<p<\infty$}\label{horolp}
Throughout this section we assume $1<p<\infty$. 
For each $y\in\ell_{p}(J)$ we denote by 
$h_{y}$ the function on $\ell_{p}(J)$ given by
\begin{align*}
		x\mapsto h_{y}(x):=\norm{x-y}_{p}-\norm{y}_{p}.
\end{align*} 
%***********************************************************************************************
\begin{lem}\label{lphoroA}
Let $\{y_{\alpha}\}_{\alpha}$ be a bounded net in $\ell_{p}(J)$.
Then there exists a subnet $\{y_{\beta}\}_{\beta}$, a vector
$z\in\ell_{p}(J)$, and a real number $c\geq\norm{z}_{p}$
such that the net $\{h_{y_{\beta}}\}_{\beta}$ converges
pointwise on $\ell_{p}(J)$ to the function
\begin{equation*}
	x\mapsto h^{\{z,c\}}(x):=\big( \norm{x-z}_p^{p}+c^{p}-\norm{z}_p^{p} \big)^{1/p} - c.
\end{equation*} 
\end{lem}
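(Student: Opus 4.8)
The plan is to follow the strategy of the proofs of \Cref{horoHilbA} and \Cref{lemmaBl1}, using that $\ell_{p}(J)$ is reflexive for $1<p<\infty$. First I would extract a well-behaved subnet: since $\{y_{\alpha}\}_{\alpha}$ is bounded and $\ell_{p}(J)$ is reflexive, Alaoglu's theorem (in the form recalled in \Cref{concepts}) provides a subnet $\{y_{\beta}\}_{\beta}$ with $y_{\beta}\overset{w}{\underset{\beta}\longrightarrow}z$ for some $z\in\ell_{p}(J)$; passing to a further subnet I may also assume $\norm{y_{\beta}}_{p}\underset{\beta}\longrightarrow c$ for some real $c\geq 0$, and weak lower semicontinuity of the norm forces $\norm{z}_{p}\leq c$. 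Moreover, since the coordinate functional $x\mapsto x(j)$ satisfies $\abs{x(j)}\leq\norm{x}_{p}$ it lies in $\ell_{p}(J)^{*}$, so the weak convergence also yields $y_{\beta}(j)\underset{\beta}\longrightarrow z(j)$ for every $j\in J$.

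The heart of the matter is a pointwise-convergence computation which, exactly as in the earlier lemmas, need only be checked on the dense subspace $c_{00}(J)$. Indeed $\{h_{y_{\beta}}\}_{\beta}$ is a family of $1$-Lipschitz functions on $\ell_{p}(J)$, and the candidate limit $h^{\{z,c\}}$ is itself $1$-Lipschitz, being the composition of the $1$-Lipschitz map $x\mapsto\norm{x-z}_{p}$ with the function $s\mapsto\big(s^{p}+c^{p}-\norm{z}_{p}^{p}\big)^{1/p}-c$, which is $1$-Lipschitz on $[0,\infty)$ because its derivative there is at most $1$; note $c^{p}-\norm{z}_{p}^{p}\geq 0$, so this is well defined. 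So fix $x\in c_{00}(J)$ with finite support $F$. Splitting the sums defining the $p$-th powers over $F$ and $J\setminus F$ gives the exact identities
\begin{align*}
	\norm{x-y_{\beta}}_{p}^{p}&=\norm{y_{\beta}}_{p}^{p}+\sum_{j\in F}\big(\abs{x(j)-y_{\beta}(j)}^{p}-\abs{y_{\beta}(j)}^{p}\big),\\
	\norm{x-z}_{p}^{p}&=\norm{z}_{p}^{p}+\sum_{j\in F}\big(\abs{x(j)-z(j)}^{p}-\abs{z(j)}^{p}\big).
\end{align*}
Since $F$ is finite and $y_{\beta}(j)\to z(j)$ for each $j\in F$, the first finite sum converges to the second, hence $\norm{x-y_{\beta}}_{p}^{p}\underset{\beta}\longrightarrow\norm{x-z}_{p}^{p}+c^{p}-\norm{z}_{p}^{p}$. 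Taking $p$-th roots (the limit is nonnegative, as a limit of $p$-th powers of norms) and subtracting $\norm{y_{\beta}}_{p}\to c$ yields $h_{y_{\beta}}(x)\underset{\beta}\longrightarrow h^{\{z,c\}}(x)$.

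To finish, I would invoke density of $c_{00}(J)$ in $\ell_{p}(J)$ together with the equi-$1$-Lipschitz property of $\{h_{y_{\beta}}\}_{\beta}$ to propagate pointwise convergence from $c_{00}(J)$ to all of $\ell_{p}(J)$; since $h^{\{z,c\}}$ is continuous, the pointwise limit must coincide with $h^{\{z,c\}}$ everywhere. The single step that needs genuine care is the passage to the limit inside the sum defining $\norm{x-y_{\beta}}_{p}$: for a general $x$ one cannot split $\norm{x-y_{\beta}}_{p}^{p}$ into finitely many coordinate contributions, and term-by-term passage to the limit in an infinite sum is not automatic — which is precisely why the reduction to finitely supported $x$ is the crucial move. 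Everything else is routine bookkeeping with subnets and elementary estimates.
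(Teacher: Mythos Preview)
Your proposal is correct and follows essentially the same route as the paper: extract a weakly convergent subnet via reflexivity and Alaoglu, pass to a further subnet to fix $c=\lim_{\beta}\norm{y_{\beta}}_{p}\geq\norm{z}_{p}$, verify the pointwise limit on $c_{00}(J)$ by the same finite-support splitting of $\norm{x-y_{\beta}}_{p}^{p}$, and then extend by density and the $1$-Lipschitz property. Your explicit check that $h^{\{z,c\}}$ is itself $1$-Lipschitz is a small addition the paper leaves implicit, but otherwise the arguments coincide.
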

\begin{proof}
Since $\{y_{\alpha}\}_{\alpha}$ is a bounded net in $\ell_{p}(J)$
and $\ell_{p}(J)$ is reflexive, it follows from Alaoglu's
theorem that there exists a subnet $\{y_{\beta}\}_{\beta}$, and a
vector $z\in\ell_{p}(J)$ such that $y_{\beta} \overset{w}{\underset{\beta}\longrightarrow} z$. 
In particular, we have 
\begin{equation}\label{coordconvp}
	y_{\beta}(j)\underset{\beta}\longrightarrow z(j), \;\text{ for all } j\in J. 
\end{equation}
By letting $c:=\liminf_{\beta}\norm{y_{\beta}}_{p}$ 
we have $\norm{z}_{p}\leq c$. Moreover, since $\{\norm{y_{\beta}}_{p}\}_{\beta}$ 
is a bounded net in $\R$, by passing to a subnet we may assume that
\begin{equation}\label{normconvp}
	\lim_{\beta}\norm{y_{\beta}}_{p} = c. 
\end{equation}

Let $x$ be any vector in $c_{00}(J)$. Then there 
exists a finite subset $F$ of $J$ such that
$x(j)\neq 0$ for all $j\in F$, and $x(j)=0$ otherwise.
By applying (\ref{coordconvp}) and (\ref{normconvp}), 
we obtain
\begin{align*}
	\norm{x-y_{\beta}}_{p}^{p} &= \sum_{j\in J}\abs{x(j)-y_{\beta}(j)}^{p}\\
							&= \sum_{j\in F}\abs{x(j)-y_{\beta}(j)}^{p} 
							+\sum_{j\in J\setminus F}\abs{y_{\beta}(j)}^{p}\\
							&=\sum_{j\in F}\abs{x(j)-y_{\beta}(j)}^{p} 
							+ \sum_{j\in J}\abs{y_{\beta}(j)}^{p} - \sum_{j\in F}\abs{y_{\beta}(j)}^{p}\\
							&\underset{\beta}\longrightarrow 
							\sum_{j\in F}\abs{x(j)-z(j)}^{p} 
							+ c^{p} - \sum_{j\in F}\abs{z(j)}^{p} \\
							&=\sum_{j\in J}\abs{x(j)-z(j)}^{p} 
							+ c^{p} - \sum_{j\in J}\abs{z(j)}^{p}\\
							&=\norm{x-z}_{p}^{p}+c^{p}-\norm{z}_{p}^{p}.
\end{align*}
Therefore, for every $x\in c_{00}(J)$ we have
\begin{align*}
	h_{y_{\beta}}(x) &= \norm{x-y_{\beta}}_{p}-\norm{y_{\beta}}_{p}\\
					&= \big( \norm{x-y_{\beta}}_{p}^{p} \big)^{1/p}-\norm{y_{\beta}}_{p}\\
					&\underset{\beta}\longrightarrow \big( \norm{x-z}_{p}^{p}+c^{p}-\norm{z}_{p}^{p}\big)^{1/p} - c\\
					&= h^{\{z,c\}}(x). 
\end{align*}
Since $c_{00}(J)$ is dense in $\ell_{p}(J)$, and
the set $\{h_{y_{\beta}}\}_{\beta}$ is a family of
$1$-Lipschitz functions on $\ell_{p}(J)$, it follows 
that $h_{y_{\beta}}\underset{\beta}\longrightarrow h^{\{z,c\}}$ 
pointwise on $\ell_{p}(J)$.
\end{proof}
%***************************************************************************************************
\Cref{lphoroA} describes metric functionals on $\ell_{p}(J)$
which correspond to bounded nets in $\ell_{p}(J)$. 
We now characterize possible 
metric functionals on $\ell_{p}(J)$ which correspond to nets with $p$-norm 
tending to infinity. In fact, we go further and give a characterization
of metric functionals which are obtained from nets with norm 
tending to infinity in any Banach space with uniformly convex dual. 
Afterwards we turn to the specific case of $\ell_{p}(J)$,
where we give a full characterization of its metric
compactification.

A Banach space $(V,\norm{\cdot})$ 
is called \textit{uniformly convex} if for every 
$0<\epsilon\leq 2$ there exists $\delta(\epsilon)>0$ 
such that $\norm{u+v}\leq 2(1-\delta)$ whenever 
$u,v\in V$ with $\norm{u}=\norm{v}=1$
and $\norm{u-v}\geq\epsilon$. A well-known result 
due to Clarkson \cite{Clarkson1936} is that $L_{p}$ 
and $\ell_{p}$ spaces are uniformly convex for all $1<p<\infty$. 

\begin{remark}\label{uniconvex}
If $\{u_{\beta}\}_{\beta}$ and $\{v_{\beta}\}_{\beta}$ are nets 
in the unit sphere of a uniformly convex 
Banach space $(V,\norm{\cdot})$ such that
$\norm{u_{\beta}+v_{\beta}}\underset{\beta}\longrightarrow 2$,
then we have $\norm{u_{\beta}-v_{\beta}}\underset{\beta}\longrightarrow 0$.
Indeed, if we suppose that
$\norm{u_{\beta}-v_{\beta}}\underset{\beta}{\centernot\longrightarrow} 0$,
then by passing to a subnet we may assume that 
$\norm{u_{\beta}-v_{\beta}}\geq\epsilon$ for some $\epsilon>0$ 
and all $\beta$. By uniform convexity, there exists
$\delta(\epsilon)>0$ such that $\norm{u_{\beta}+v_{\beta}}\leq 2(1-\delta)$,
for all $\beta$. Therefore, 
\begin{align*}
	2=\lim_{\beta}\norm{u_{\beta}+v_{\beta}}\leq 2-2\delta,
\end{align*}
which is a contradiction.
\end{remark}
%**********************************************************************************************
\begin{lem}\label{unblp}
Let $(X,\norm{\cdot})$ be an infinite-dimensional 
Banach space with uniformly convex dual space $(X^{*},\norm{\cdot}_{*})$. 
Let $\{y_{\alpha}\}_{\alpha}$ be a net in $X$ 
with $\norm{y_{\alpha}}\underset{\alpha}\longrightarrow +\infty$. 
Then there exists a subnet $\{y_{\beta}\}_{\beta}$ and a continuous linear
functional $\mu\in X^{*}$ with $\norm{\mu}_*\leq 1$
such that $h_{y_{\beta}}\underset{\beta}\longrightarrow -\mu$ 
pointwise on $X$.
\end{lem}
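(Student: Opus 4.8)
The plan is to mimic the structure of \Cref{horoHilbB}, replacing the inner product with the duality pairing supplied by a support functional, and to use uniform convexity of $X^*$ exactly where Hilbert space gave us the identity $\langle z_\beta,\cdot\rangle$ in the limit. First I would normalize: since $\norm{y_\alpha}\to+\infty$, pass to a subnet with $\norm{y_\alpha}>0$ and set $u_\alpha:=y_\alpha/\norm{y_\alpha}$, so $\norm{u_\alpha}=1$ for all $\alpha$. For each $\alpha$ choose, by Hahn--Banach, a norming functional $\mu_\alpha\in X^*$ with $\norm{\mu_\alpha}_*=1$ and $\mu_\alpha(u_\alpha)=1$. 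By Alaoglu's theorem the net $\{\mu_\alpha\}_\alpha$ has a weak-star convergent subnet $\mu_\beta\overset{w^*}\longrightarrow\mu$ with $\norm{\mu}_*\leq 1$; this $\mu$ will be the functional in the statement.

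Next I would establish the pointwise limit $h_{y_\beta}(x)\to -\mu(x)$. Fix $x\in X$. Write
\begin{align*}
	h_{y_\beta}(x)=\norm{x-y_\beta}-\norm{y_\beta}
	=\norm{y_\beta}\Big(\big\|\tfrac{x}{\norm{y_\beta}}-u_\beta\big\|-1\Big).
\end{align*}
The factor $\norm{y_\beta}\to+\infty$, so I need the bracket to behave like $-\mu_\beta(x)/\norm{y_\beta}+o(1/\norm{y_\beta})$; equivalently I must show
\begin{align*}
	\norm{y_\beta}\,\big\| u_\beta-\tfrac{x}{\norm{y_\beta}}\big\| - \norm{y_\beta} \;\longrightarrow\; -\mu(x).
\end{align*}
For the lower bound this is immediate from the norming property: $\big\|u_\beta-\tfrac{x}{\norm{y_\beta}}\big\|\ge \mu_\beta\big(u_\beta-\tfrac{x}{\norm{y_\beta}}\big)=1-\tfrac{\mu_\beta(x)}{\norm{y_\beta}}$, hence $h_{y_\beta}(x)\ge -\mu_\beta(x)\to -\mu(x)$, giving $\liminf_\beta h_{y_\beta}(x)\ge -\mu(x)$. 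The matching upper bound is where uniform convexity of $X^*$ enters, and I expect this to be the main obstacle.

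For the upper bound I would argue as follows. Suppose, after passing to a further subnet, that $h_{y_\beta}(x)\to L$ for some $L\in\R$ (the net is bounded in $\R$ since $\{h_{y_\beta}\}$ is $1$-Lipschitz and $h_{y_\beta}(0)=0$). Choose for each $\beta$ a norming functional $\nu_\beta\in X^*$, $\norm{\nu_\beta}_*=1$, with $\nu_\beta\big(u_\beta-\tfrac{x}{\norm{y_\beta}}\big)=\big\|u_\beta-\tfrac{x}{\norm{y_\beta}}\big\|$. Then
\begin{align*}
	h_{y_\beta}(x)=\norm{y_\beta}\Big(\nu_\beta\big(u_\beta-\tfrac{x}{\norm{y_\beta}}\big)-1\Big)
	=\norm{y_\beta}\big(\nu_\beta(u_\beta)-1\big)-\nu_\beta(x).
\end{align*}
Since $\nu_\beta(u_\beta)\le 1$ the first term is $\le 0$, so $L=\lim h_{y_\beta}(x)\le -\limsup_\beta \nu_\beta(x)$; I would then show that $\nu_\beta$ and $\mu_\beta$ must get close in $X^*$, so that $\nu_\beta(x)$ and $\mu_\beta(x)$ have the same limit. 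To see the closeness: $\norm{\mu_\beta+\nu_\beta}_*\ge (\mu_\beta+\nu_\beta)(u_\beta)=1+\nu_\beta(u_\beta)$, and from the previous line $\nu_\beta(u_\beta)=1+h_{y_\beta}(x)/\norm{y_\beta}+\nu_\beta(x)/\norm{y_\beta}\to 1$ because $h_{y_\beta}(x)$ is bounded, $\nu_\beta(x)$ is bounded by $\norm{x}$, and $\norm{y_\beta}\to\infty$. Hence $\norm{\mu_\beta+\nu_\beta}_*\to 2$, and since $\mu_\beta,\nu_\beta$ lie in the unit sphere of the uniformly convex space $X^*$, \Cref{uniconvex} gives $\norm{\mu_\beta-\nu_\beta}_*\to 0$. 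Therefore $\nu_\beta(x)-\mu_\beta(x)\to 0$, so $\limsup_\beta\nu_\beta(x)=\lim_\beta\mu_\beta(x)=\mu(x)$, whence $L\le -\mu(x)$. Combining with the lower bound, $h_{y_\beta}(x)\to -\mu(x)$ along the original subnet (the usual subnet argument: every subnet has a further subnet with this limit, so the net itself converges). This holds for every $x\in X$, giving $h_{y_\beta}\to -\mu$ pointwise, and $\norm{-\mu}_* = \norm{\mu}_*\le 1$, completing the proof. The delicate point, and the one I would write out most carefully, is the interplay of the two norming functionals $\mu_\beta$ (norming $u_\beta$) and $\nu_\beta$ (norming the perturbed vector) and the bookkeeping that makes $\norm{\mu_\beta+\nu_\beta}_*\to2$; everything else is routine.
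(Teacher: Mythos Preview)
Your proof is correct and follows essentially the same route as the paper: normalize to $u_\alpha=y_\alpha/\norm{y_\alpha}$, choose support functionals $\mu_\alpha$ and extract a weak-star limit $\mu$ via Alaoglu, then for fixed $x$ introduce a second support functional (your $\nu_\beta$, the paper's $\mu_\beta^{x}$) at the perturbed direction and use uniform convexity of $X^{*}$ to force $\norm{\mu_\beta-\nu_\beta}_{*}\to 0$. The only difference is presentational: you frame the final step as an explicit squeeze between a lower bound (from $\mu_\beta$) and an upper bound (from $\nu_\beta$), whereas the paper compresses this into the single identity $h_{y_\beta}(x)=-\mu_\beta^{x}(x)+(\mu_\beta^{x}(z_\beta)-1)\norm{y_\beta}$ and invokes the two limits directly; your version makes the convergence of the second term more transparent.
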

\begin{proof}
Without loss of generality we may assume that $\norm{y_{\alpha}} \neq 0$, 
for all $\alpha$. Consider the net of unit vectors 
$\{z_{\alpha}\}_{\alpha}$ in $X$ given by 
$z_\alpha = y_{\alpha}/\norm{y_{\alpha}}$, for all $\alpha$. 
By the Hahn-Banach theorem, for each $\alpha$ there exists 
$\mu_{\alpha}\in X^{*}$ with $\norm{\mu_\alpha}_*=1$ 
such that $\mu_{\alpha}(z_{\alpha})=1$.
Hence, by Alaoglu's theorem there exists
a subnet $\{\mu_{\beta}\}_{\beta}$ and a continuous linear
functional $\mu\in X^{*}$ 
with $\norm{\mu}_{*}\leq 1$ such that 
$\mu_{\beta} \overset{w^{*}}{\underset{\beta} \longrightarrow} \mu$.
Now, let $x$ be any vector in $X$. By extracting a further subnet 
if necessary, we may assume that $x-y_{\beta}\neq 0$ for all $\beta$. 
Define the net of unit vectors $\{z_{\beta}^{x}\}_{\beta}$ in $X$ by  
\begin{equation}\label{unitzk}
	z_{\beta}^{x}:=\frac{y_{\beta} - x}{\norm{x-y_{\beta}}}.
\end{equation}
By the Hahn-Banach theorem, it follows that for each $\beta$
there exists $\mu_{\beta}^{x}\in X^{*}$ with 
$\norm{\mu_{\beta}^{x}}_*=1$ such that
\begin{equation}\label{funatx}
		1=\mu_{\beta}^{x}(z_{\beta}^{x})
			=\dfrac{\mu_{\beta}^{x}(y_{\beta})-\mu_{\beta}^{x}(x)}{\norm{x-y_{\beta}}}
			=\dfrac{\mu_{\beta}^{x}(z_{\beta})-\mu_{\beta}^{x}\bigg(\dfrac{x}{\norm{y_{\beta}}}\bigg)}
					{\norm{\dfrac{x}{\norm{y_{\beta}}}-z_\beta}}.				
\end{equation}
On the other hand, for every $\beta$ we have
\begin{equation}\label{sandw}
	-\frac{\norm{x}}{\norm{y_\beta}} +1 \leq \norm{\frac{x}{\norm{y_\beta}}-z_\beta}
								\leq \frac{\norm{x}}{\norm{y_\beta}} + 1,
\end{equation}
and also
\begin{equation}\label{zeroinq}
	\abs{\mu_{\beta}^{x}\bigg(\dfrac{x}{\norm{y_{\beta}}}\bigg)}
								\leq \dfrac{\norm{x}}{\norm{y_{\beta}}}.
\end{equation}
Hence, by applying the assumption $\norm{y_{\beta}} \underset{\beta}\longrightarrow +\infty$ 
in (\ref{sandw}) and (\ref{zeroinq}) we obtain 
\begin{align}
	\norm{\frac{x}{\norm{y_\beta}}-z_\beta} &\underset{\beta}\longrightarrow 1, \label{limitone} \\
	\mu_{\beta}^{x}\bigg(\dfrac{x}{\norm{y_{\beta}}}\bigg) &\underset{\beta}\longrightarrow 0. \label{zero}
\end{align} 
Therefore, by applying (\ref{limitone}) and (\ref{zero}) in 
(\ref{funatx}), it follows that
\begin{equation}\label{limitXone}
	\mu_{\beta}^{x}(z_{\beta})\underset{\beta}\longrightarrow 1, 
\end{equation}
and hence, 
\[
	2\geq\norm{\mu_{\beta}^{x}+\mu_{\beta}}_{*}
		\geq \abs{(\mu_{\beta}^{x}+\mu_{\beta})(z_{\beta})}
		=\abs{\mu_{\beta}^{x}(z_{\beta})+\mu_{\beta}(z_{\beta})}
	\underset{\beta}\longrightarrow 2.
\]
Since $X^{*}$ is uniformly convex, it follows from \Cref{uniconvex}
that 
\begin{equation*}
	\norm{\mu_{\beta}^{x}-\mu_{\beta}}_*\underset{\beta}\longrightarrow 0.
\end{equation*}
Then we obtain
\begin{equation}\label{functconv}
	\mu_{\beta}^{x}\overset{w^{*}}{\underset{\beta}\longrightarrow} \mu.
\end{equation}
Finally, it follows from (\ref{funatx}) that
\begin{equation*}
	\norm{x-y_{\beta}}=-\mu_{\beta}^{x}(x)+\mu_{\beta}^{x}(z_{\beta})\norm{y_{\beta}},
\end{equation*}
and therefore by applying (\ref{limitXone}) and (\ref{functconv}) we obtain
\begin{align*}
	h_{y_{\beta}}(x)&=\norm{x-y_{\beta}}-\norm{y_{\beta}}\\
						&=-\mu_{\beta}^{x}(x)+\mu_{\beta}^{x}(z_{\beta})\norm{y_{\beta}}-\norm{y_{\beta}}\\
						&\underset{\beta}\longrightarrow -\mu(x).
\end{align*}
\end{proof}

We are now ready to give a full characterization 
of the metric compactification of $\ell_{p}(J)$
for all $1<p<\infty$.
%***************************************************************************************
\begin{thrm}\label{Thmlp}
Let $1<p,q<\infty$ such that $p^{-1}+q^{-1}=1$. 
The metric compactification
$\overline{\ell_{p}(J)}^{h}=\overline{\ell_{p}(J)}^{h,F}\sqcup\overline{\ell_{p}(J)}^{h,\infty}$ 
of the infinite-dimensional $\ell_{p}(J)$ space is given by
\begin{align*}
	\overline{\ell_{p}(J)}^{h,F} &= \left\{
						h^{\{z,c\}}\in\R^{\ell_{p}(J)}			
								\;\big\vert\; z\in \ell_{p}(J),\; c\geq\norm{z}_{p}
						\right\}\cup\{0\}, \\
	\overline{\ell_{p}(J)}^{h,\infty} &= \left\{ 
						h^{\{\mu\}}\in\R^{\ell_{p}(J)} \;\big\vert\; 						
								\mu\in \ell_{q}(J),\; 0<\norm{\mu}_{q}\leq 1
						\right\},
\end{align*}
where for every $x\in\ell_{p}(J)$
\begin{align*}
	h^{\{z,c\}}(x) &=\big( \norm{x-z}_{p}^{p}+c^{p}-\norm{z}_{p}^{p} \big)^{1/p} - c,\\
	h^{\{\mu\}}(x) &=-\sum_{j\in J}\mu(j)x(j).
\end{align*}
\end{thrm}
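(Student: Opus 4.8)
The plan is to mirror the two-step scheme used for Hilbert spaces in the proof of \Cref{Hilberthoro}, with \Cref{lphoroA} and \Cref{unblp} replacing the Hilbert-space lemmas. Write $q$ for the conjugate exponent, so $\ell_{p}(J)^{\ast}$ is isometrically $\ell_{q}(J)$; since $1<q<\infty$ this dual is uniformly convex by Clarkson's theorem, and $\ell_{p}(J)$ is itself smooth, so every nonzero vector has a unique norming functional. For the inclusion $\overline{\ell_{p}(J)}^{h}\subseteq\overline{\ell_{p}(J)}^{h,F}\sqcup\overline{\ell_{p}(J)}^{h,\infty}$, take $h=\lim_{\alpha}h_{y_{\alpha}}$ and, using compactness of $[0,+\infty]$, pass to a subnet along which $\norm{y_{\alpha}}_{p}\to c\in[0,+\infty]$. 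If $c<\infty$ the subnet is bounded, and \Cref{lphoroA} gives $z\in\ell_{p}(J)$ and $c'\geq\norm{z}_{p}$ with $h=h^{\{z,c'\}}$, which attains its minimum at $x=z$ and hence is finite. If $c=+\infty$, then \Cref{unblp} applies and produces $\mu\in\ell_{q}(J)$ with $\norm{\mu}_{q}\leq1$ and $h=-\iprod{\mu,\cdot}$; when $\mu=0$ this is the zero functional, and when $\mu\neq0$ it is unbounded below. This settles one inclusion and pins down the three families exactly as in the statement.

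For the reverse inclusion I handle the families separately. Given $z\in\ell_{p}(J)$ and $c\geq\norm{z}_{p}$, I would imitate the finite-subset net from the proof of \Cref{Thml1}: for each finite $F\subseteq J$ choose $j_{F}\in J\setminus F$ and let $y_{F}$ agree with $z$ on $F$, equal $\bigl(c^{p}-\sum_{j\in F}\abs{z(j)}^{p}\bigr)^{1/p}$ at $j_{F}$ (a nonnegative real, since partial sums of $\norm{z}_{p}^{p}$ never exceed $c^{p}$), and vanish elsewhere, so that $\norm{y_{F}}_{p}=c$ for every $F$. A direct computation shows that as soon as $F$ contains the finite support of a given $x\in c_{00}(J)$ one has $\norm{x-y_{F}}_{p}^{p}=\norm{x-z}_{p}^{p}+c^{p}-\norm{z}_{p}^{p}$, so $h_{y_{F}}(x)=h^{\{z,c\}}(x)$; density of $c_{00}(J)$ and the uniform $1$-Lipschitz property of $\{h_{y_{F}}\}_{F}$ then extend this to all of $\ell_{p}(J)$. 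The zero functional is obtained, for instance, from $y_{n}=n^{1/p}e_{j_{n}}$ with distinct indices $j_{n}$, or as the pointwise limit of $h^{\{0,c\}}$ as $c\to+\infty$ inside the (already established) compact set $\overline{\ell_{p}(J)}^{h}$.

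The step I expect to be the main obstacle is realizing every $h^{\{\mu\}}$ with $0<\norm{\mu}_{q}\leq1$. By the argument in \Cref{unblp} it suffices to exhibit vectors $z_{n}\in\ell_{p}(J)$ with $\norm{z_{n}}_{p}\to1$ whose (unique) norming functionals $\mu_{n}\in\ell_{q}(J)$ converge to $\mu$ in the weak-$\ast$ topology; then $y_{n}=nz_{n}$ has $\norm{y_{n}}_{p}\to+\infty$ and $h_{y_{n}}\to-\iprod{\mu,\cdot}=h^{\{\mu\}}$. Put $\nu=\mu/\norm{\mu}_{q}$, let $w\in\ell_{p}(J)$ be the unit vector normed by $\nu$, namely $w(j)=\operatorname{sgn}(\nu(j))\abs{\nu(j)}^{q-1}$, set $\lambda=\norm{\mu}_{q}^{1/(p-1)}\in(0,1]$, choose distinct indices $k_{n}$ with $w(k_{n})\to0$ (possible since $w\in\ell_{p}(J)$), and let $z_{n}=\lambda w+(1-\lambda^{p})^{1/p}e_{k_{n}}$. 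Then $\norm{z_{n}}_{p}\to1$, and computing $\mu_{n}$ coordinatewise gives $\iprod{\mu_{n},x}\to\lambda^{p-1}\iprod{\nu,x}=\iprod{\mu,x}$ for every $x\in\ell_{p}(J)$, the $k_{n}$-th coordinate of $\mu_{n}$ staying bounded while $x(k_{n})\to0$; thus $\mu_{n}\to\mu$ weak-$\ast$. The points that need care are that $z_{n}$ is only asymptotically a unit vector, so the normalizing factor $\norm{z_{n}}_{p}^{p-1}\to1$ must be carried through the estimates, and that when $\norm{\mu}_{q}<1$ the functional $\mu$ norms no unit vector, so it must be produced as a genuine weak-$\ast$ limit of norm-one functionals --- which is exactly the role of the weakly null spike $e_{k_{n}}$ (when $\norm{\mu}_{q}=1$ we have $\lambda=1$, the spike disappears, and $y_{n}=nw$ suffices).
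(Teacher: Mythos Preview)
Your proposal is correct and follows essentially the same two-step scheme as the paper: the forward inclusion via \Cref{lphoroA} and \Cref{unblp} is identical, and the reverse inclusion is in both cases handled by ``spike'' constructions that push the norm (of the vector or of the norming functional) to the required value while converging weakly to the target. The only differences are cosmetic: for $h^{\{z,c\}}$ the paper uses the simpler sequence $y_{m}=z+a_{z,c}\,e_{j_{m}}$ rather than your net indexed by finite subsets, and for $h^{\{\mu\}}$ the paper adds the spike directly in $\ell_{q}(J)$ (defining $\mu_{m}$ with $\norm{\mu_{m}}_{q}=1$ exactly and then dualizing), which avoids carrying the asymptotic normalizing factor $\norm{z_{n}}_{p}^{p-1}\to1$ through the estimates.
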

%*******************************************************************************************
\begin{proof}
%[Proof of Theorem \ref{Thmlp}]
Let $h\in\overline{\ell_{p}(J)}^{h}$. 
Then there exists a net $\{y_{\alpha}\}_{\alpha}$ 
in $\ell_{p}(J)$ such that 
$h_{y_{\alpha}}\underset{\alpha}\longrightarrow h$ pointwise on $\ell_{p}(J)$. 
Suppose that the net $\{y_{\alpha}\}_{\alpha}$ is bounded in $\ell_{p}(J)$. 
Then it follows from \Cref{lphoroA} that there exists a 
subnet $\{y_{\beta}\}_{\beta}$, a vector
$z\in\ell_{p}(J)$, and a real number $c\geq\norm{z}_{p}$
such that 
\begin{equation*}
	\lim_{\beta} h_{y_{\beta}}(x)= \big( \norm{x-z}_p^{p}+c^{p}-\norm{z}_p^{p} \big)^{1/p} - c
						=h^{\{z,c\}}(x),\; \text{ for all } x\in\ell_{p}(J). 
\end{equation*} 
Hence $h=h^{\{z,c\}}$ 
and so $h\in \overline{\ell_{p}(J)}^{h,F}\setminus\{0\}$.
Now, suppose that the net $\{y_{\alpha}\}_{\alpha}$ is unbounded in 
$\ell_{p}(J)$. Then, by passing to a subnet we may assume that
$\norm{y_{\alpha}}_{p}\underset{\alpha}\longrightarrow +\infty$. 
Since the dual space of $\ell_{p}(J)$ is the uniformly convex space $\ell_{q}(J)$,
it follows from \Cref{unblp} 
and $\ell_{p}/\ell_{q}$ duality that there exists 
$\mu\in \ell_{q}(J)$ with $\norm{\mu}_{q}\leq 1$ such that
\begin{equation*}
	\lim_{\beta} h_{y_{\beta}}(x)= -\sum_{j\in J}\mu(j)x(j), 
\end{equation*}
for all $x\in\ell_{p}(J)$. Therefore $h$ belongs to 
$\overline{\ell_{p}(J)}^{h,\infty}\cup\{0\}$. We have proved that
the inclusion
$\overline{\ell_{p}(J)}^{h} \subseteq \overline{\ell_{p}(J)}^{h,F}\sqcup\overline{\ell_{p}(J)}^{h,\infty}$
holds.

On the other hand, suppose that $h^{\{z,c\}}\in\overline{\ell_{p}(J)}^{h,F}\setminus\{0\}$, 
for some $z\in\ell_{p}(J)$ and some $c\geq\norm{z}_p$.
Put $a_{z,c}=\left( c^{p}-\norm{z}_{p}^{p} \right)^{1/p}$.
Pick any countably infinite subset $K=\{j_{m}\}_{m=1}^{\infty}$ 
of $J$, and for each $m\in\N$ define
\begin{equation}\label{bdseq}
	y_{m}(j):=\begin{cases}
         a_{z,c} + z(j), & \text{if } j=j_{m},\\
         z(j), & \text{if } j\neq j_{m}.
        \end{cases}
\end{equation}
It is clear that $y_{m}\in\ell_{p}(J)$ for all $m\in\N$. 
Moreover, we have $y_{m}\overset{w}\longrightarrow z$ and 
$\norm{y_{m}}_{p}^{p}\longrightarrow c^{p}$, as $m\to\infty$.
Then, for every $x\in\ell_{p}(J)$ we have
\begin{align*}
	\norm{x-y_{m}}_{p}^{p} &= \sum_{j\neq j_{m}}\abs{x(j)-z(j)}^{p} 
							+ \abs{x(j_{m})- a_{z,c} - z(j_{m})}^{p}\\
							&= \norm{x-z}_{p}^{p}-\abs{x(j_{m})-z(j_{m})}^{p}
							+ \abs{x(j_{m})- a_{z,c} - z(j_{m})}^{p}\\
							&\underset{m\to\infty}\longrightarrow 
							\norm{x-z}_{p}^{p}+c^{p}-\norm{z}_{p}^{p}.
\end{align*}
Therefore, (\ref{bdseq}) defines a bounded sequence in $\ell_{p}(J)$ such that
\begin{align*}
	\lim_{m\to\infty}h_{y_{m}}(x) &=\lim_{m\to\infty} \big[ \norm{x-y_{m}}_{p} - \norm{y_{m}}_{p} \big]\\
								&=\big(\norm{x-z}_{p}^{p}+c^{p}-\norm{z}_{p}^{p}\big)^{1/p} - c\\
								& =h^{\{z,c\}}(x),\; \text{ for all } x\in\ell_{p}(J). 
\end{align*}
Hence $h^{\{z,c\}}\in\overline{\ell_{p}(J)}^{h}$. 
Now, suppose that $h\in\overline{\ell_{p}(J)}^{h,\infty}\cup\{0\}$.
Then for every $x\in\ell_{p}(J)$ we have
\begin{equation*}
	h(x)=-\sum_{j\in J}\mu(j)x(j),
\end{equation*} 
for some $\mu\in\ell_{q}(J)$ with $\norm{\mu}_{q}\leq 1$.
Pick any countably infinite subset $K=\{j_{m}\}_{m=1}^{\infty}$ 
of $J$, and define the sequence $\{\mu_{m}\}_{m\in\N}$ in $\ell_{q}(J)$ 
by
\begin{equation}
	\mu_{m}(j):=\begin{cases}
			 \left( 1-\norm{\mu}_{q}^{q}+\abs{\mu(j)}^{q} \right)^{1/q}, & \text{ if } j=j_{m},\\
			 \mu(j), & \text{ if } j\neq j_{m}.
			 \end{cases}
\end{equation}
Then $\norm{\mu_{m}}_{q}=1$ for all $m\in\N$, and 
also $\mu_{m}\overset{w}\longrightarrow \mu$.
By $\ell_{p}/\ell_{q}$ duality, it follows that for each 
$m\in\N$ there exists $z_{m}\in\ell_{p}(J)$ with $\norm{z_{m}}_{p}=1$
such that $\sum_{j\in J}\mu_{m}(j)z_{m}(j)=1$. Therefore, by letting $y_{m}=mz_{m}$
and by proceeding as in the proof of \Cref{unblp}, we obtain
$h_{y_{m}}(x)\rightarrow h(x)$ for all $x\in\ell_{p}(J)$, as $m\to\infty$. 
Hence $ h$ belongs to $\overline{\ell_{p}(J)}^{h}$. Consequently,
the inclusion 
$\overline{\ell_{p}(J)}^{h,F}\sqcup\overline{\ell_{p}(J)}^{h,\infty}\subseteq \overline{\ell_{p}(J)}^{h}$
also holds.
\end{proof}
%*********************************************************************************************
\section*{Acknowledgement}
The author is very grateful to Prof. Kalle Kyt\"ol\"a, Prof. Anders Karlsson, 
and Prof. Olavi Nevanlinna for many valuable discussions and
suggestions. The author is also thankful to the anonymous referee 
for valuable suggestions that improved the presentation of 
this paper.

%%%%%%%%%%%%%%%%%%%%%%%%%%%%%%%%%%%%%%%%%%%%%%%%%%%%%%%%%%%%%%%%%%%%%%%%%%%%%%%%

\begin{thebibliography}{99}
\bibitem{Ballman1995}
Werner Ballmann.
\newblock {\em Lectures on spaces of nonpositive curvature}, volume~25 of {\em
  DMV Seminar}.
\newblock Birkh\"auser Verlag, Basel, 1995.
\newblock With an appendix by Misha Brin.

\bibitem{Ballmann_Gromov_Schroeder1985}
Werner Ballmann, Mikhael Gromov, and Viktor Schroeder.
\newblock {\em Manifolds of nonpositive curvature}, volume~61 of {\em Progress
  in Mathematics}.
\newblock Birkh\"auser Boston, Inc., Boston, MA, 1985.

\bibitem{Bridson_Haefliger1999}
Martin~R. Bridson and Andr\'e Haefliger.
\newblock {\em Metric spaces of non-positive curvature}, volume 319 of {\em
  Grundlehren der Mathematischen Wissenschaften [Fundamental Principles of
  Mathematical Sciences]}.
\newblock Springer-Verlag, Berlin, 1999.

\bibitem{Clarkson1936}
James~A. Clarkson.
\newblock Uniformly convex spaces.
\newblock {\em Trans. Amer. Math. Soc.}, 40(3):396--414, 1936.

\bibitem{Friedland_Freitas2004}
Shmuel Friedland and Pedro~J. Freitas.
\newblock {$p$-metrics on ${\rm GL}(n,\mathbb C)/\rm U_n$ and their Busemann compactifications}.
\newblock {\em Linear Algebra Appl.}, 376:1--18, 2004.
 
\bibitem{Gaubert_Vigeral2012}
St{\'e}phane Gaubert and Guillaume Vigeral.
\newblock A maximin characterisation of the escape rate of non-expansive
  mappings in metrically convex spaces.
\newblock {\em Math. Proc. Cambridge Philos. Soc.}, 152(2):341--363, 2012.

\bibitem{Gouzel_Karlsson2015}
S.~{Gou{\"e}zel} and A.~{Karlsson}.
\newblock {Subadditive and Multiplicative Ergodic Theorems}.
\newblock {\em ArXiv e-prints}, September 2015.

\bibitem{Gromov1981}
M.~Gromov.
\newblock Hyperbolic manifolds, groups and actions.
\newblock In {\em Riemann surfaces and related topics: {P}roceedings of the
  1978 {S}tony {B}rook {C}onference ({S}tate {U}niv. {N}ew {Y}ork, {S}tony
  {B}rook, {N}.{Y}., 1978)}, volume~97 of {\em Ann. of Math. Stud.}, pages
  183--213. Princeton Univ. Press, Princeton, N.J., 1981.

\bibitem{Gutierrez2017}
A.~W. {Guti{\'e}rrez}.
\newblock {The horofunction boundary of finite-dimensional $\ell_{p}$ spaces}.
\newblock {To appear in Colloquium Mathematicum.}

\bibitem{Ji_schilling2016}
L.~{Ji} and A.-S. {Schilling}.
\newblock {Polyhedral Horofunction Compactification as Polyhedral Ball}.
\newblock {\em ArXiv e-prints}, July 2016.

\bibitem{Karlsson_Metz_Noskov2006}
A.~Karlsson, V.~Metz, and G.~A. Noskov.
\newblock Horoballs in simplices and {M}inkowski spaces.
\newblock {\em Int. J. Math. Math. Sci.}, pages Art. ID 23656, 20, 2006.

\bibitem{Karlsson2014}
Anders Karlsson.
\newblock Dynamics of {H}ilbert nonexpansive maps.
\newblock In {\em Handbook of {H}ilbert geometry}, volume~22 of {\em IRMA Lect.
  Math. Theor. Phys.}, pages 263--273. Eur. Math. Soc., Z\"urich, 2014.

\bibitem{Karlsson_Ledrappier2011}
Anders Karlsson and Fran{\c{c}}ois Ledrappier.
\newblock Noncommutative ergodic theorems.
\newblock In {\em Geometry, rigidity, and group actions}, Chicago Lectures in
  Math., pages 396--418. Univ. Chicago Press, Chicago, IL, 2011.

\bibitem{Kelley1975}
John~L. Kelley.
\newblock {\em General topology}.
\newblock Springer-Verlag, New York-Berlin, 1975.
\newblock Reprint of the 1955 edition [Van Nostrand, Toronto, Ont.], Graduate
  Texts in Mathematics, No. 27.

\bibitem{Khamsi_Kirk2001}
Mohamed~A. Khamsi and William~A. Kirk.
\newblock {\em An introduction to metric spaces and fixed point theory}.
\newblock Pure and Applied Mathematics (New York). Wiley-Interscience, New
  York, 2001.

\bibitem{Maher_Tiozzo2014}
J.~{Maher} and G.~{Tiozzo}.
\newblock {Random walks on weakly hyperbolic groups}.
\newblock {\em ArXiv e-prints}, October 2014.

\bibitem{Rieffel2002}
Marc~A. Rieffel.
\newblock Group {$C^*$}-algebras as compact quantum metric spaces.
\newblock {\em Doc. Math.}, 7:605--651, 2002.

\bibitem{Walsh2007}
Cormac Walsh.
\newblock The horofunction boundary of finite-dimensional normed spaces.
\newblock {\em Math. Proc. Cambridge Philos. Soc.}, 142(3):497--507, 2007.

\bibitem{Walsh2016}
Cormac Walsh. 
\newblock Hilbert and Thompson geometries isometric to infinite-dimensional Banach spaces.
\newblock {\em ArXiv e-prints}, October 2016.
\end{thebibliography}
\end{document}